\documentclass[12pt]{amsart}
\usepackage{mathtools} % basic maths packages
\usepackage{dsfont} % for mathematical fonts like mathbb{R}
\usepackage{amssymb}

\usepackage[pdftex]{graphicx} % for including pictures with includegraphics
\usepackage{color}
\usepackage[colorlinks,citecolor=blue,linkcolor=blue,urlcolor=blue]{hyperref} % for references in the text with colors
\usepackage{cleveref}

\usepackage[utf8]{inputenc} % to translate accents to intern latex code
\usepackage[english]{babel} % for english language

\usepackage{geometry} % for size, margins, ...
\usepackage{tikz} % for pictures with tikz
\usetikzlibrary{calc,intersections}
\usetikzlibrary{hobby}
\usepackage{cite} %bibtex
\usepackage{float} %positioning figures

\newtheorem{theorem}{Theorem}

\newtheorem{lemma}[theorem]{Lemma}
\newtheorem{coro}[theorem]{Corollary}
\newtheorem{definition}[theorem]{Definition}

\newtheorem{proposition}[theorem]{Proposition}
\newtheorem{example}[theorem]{Example}

\newtheorem{Remark}[theorem]{Remark}

\newtheorem{fakethm}{Theorem}

\numberwithin{theorem}{section} % theorem labeling within sections
\numberwithin{equation}{section} % equation labeling within sections
\numberwithin{figure}{section} % figure labeling within sections

\newcommand*{\R}{\mathbb{R}}
\newcommand*{\C}{\mathbb{C}}

\newcommand*{\Q}{\mathbb{Q}}

\newcommand*{\Z}{\mathbb{Z}}
\newcommand*{\N}{\mathbb{N}}

\DeclareMathOperator{\SL}{\mathrm{SL}}
\DeclareMathOperator{\PGL}{\mathrm{PGL}}

\DeclareMathOperator{\PSL}{\mathrm{PSL}}

\DeclareMathOperator{\tr}{tr}

\title{Nuancing the unicity of $q$-rationals}

\author{Perrine Jouteur, Olga Paris-Romaskevich and Alexander Thomas}

\address{Université de Reims Champagne Ardenne, Laboratoire de Mathématiques, CNRS UMR 9008, Moulin de la Housse - BP 1039, 51687 Reims cedex 2, France}
\email{perrine.jouteur@univ-reims.fr}

\address{CNRS, ICJ UMR 5208, École Centrale de Lyon, INSA Lyon, Université Claude Bernard Lyon 1, Université Jeann Monnet, 69622 Villeurbanne, France}
\email{paro@math.univ-lyon1.fr}

\address{CNRS, ICJ UMR 5208, École Centrale de Lyon, INSA Lyon, Université Claude Bernard Lyon 1, Université Jeann Monnet, 69622 Villeurbanne, France}
\email{athomas@math.univ-lyon1.fr}

\begin{document}

\begin{abstract}
We prove unicity of $q$-rational numbers up to conjugacy, using character varieties. Despite the unicity, we exhibit a two-parameter family of deformations of rationals with a modular symmetry. We prove that there are exactly two deformations which deliver the usual $q$-integers: the original $q$-rationals defined by Morier-Genoud and Ovsienko, and another new one. Although the new family can be obtained by conjugacy from the old one, new positivity properties appear. In addition, this new family provides a direct computation of the Jones polynomial of rational knots.
\end{abstract}

\maketitle

\setcounter{tocdepth}{1}
\tableofcontents

%%%%%%%%%%%%%%%%%%%%%%%%%%%%%%%%%%%%%%%%%%%%%%%%%%%%%%%%%%%%%%%%%%%%%%%%%%%
\section{Introduction}
%%%%%%%%%%%%%%%%%%%%%%%%%%%%%%%%%%%%%%%%%%%%%%%%%%%%%%%%%%%%%%%%%%%%%%%%%%%

For a formal parameter $q$, and an integer $n\in\Z$, we define the $q$-integer as 
\begin{equation}\label{Eq:def-n-q}
[n]_q:=\frac{1-q^n}{1-q} \, .
\end{equation}
Integers are recovered as the limit of the right-hand side when $q$ goes to 1. 
%Under this deformation, some of their properties stay valid, as for example  Pascal's identity \textcolor{teal}{Not exactly true: Pascal's identity gets $q$-deformed}.
The $q$-integers  make their appearance in combinatorics, notably in counting problems in vector spaces over finite fields, but also in representation theory of quantum groups, as well as in knot theory.

In \cite{MGO-2020}, Morier-Genoud and Ovsienko extended this classical $q$-deformation of integers to all rational numbers. Their construction relies on a deformation of the standard action of the modular group $\PSL_2(\Z)$ on the projective line $\mathbb{QP}^1$ by fractional linear transformations. It is defined via the following deformation of the two generators of the modular group $\PSL_2(\Z)= \left<T,S ~|\; S^2=(TS)^3=1\right>$:
\begin{equation}\label{eq:q_representation}
\begin{array}{c c c}
T = \begin{pmatrix}
		1 & 1\\
		0 & 1\\
		\end{pmatrix} & \overset{q}{\rightsquigarrow} & 
T_q := \begin{pmatrix}
		q & 1\\
		0 & 1\\
		\end{pmatrix},\\[20pt]	
S = \begin{pmatrix}
		0 & -1\\
		1 & 0\\
		\end{pmatrix} & \overset{q}{\rightsquigarrow} &
	 S_q := \begin{pmatrix}
		0 & -1\\
		q & 0\\
		\end{pmatrix}.\\
	\end{array}
\end{equation}

At this formal level, the matrices $T_q$ and $S_q$ are elements of $\PGL_2(\Z[q,q^{-1}])$. One checks that as Möbius transformations, we still have $S_q^2=(T_qS_q)^3 = 1$. Hence the group generated by $T_q$ and $S_q$ is still $\PSL_2(\Z)$.

Given a matrix $M\in \PSL_2(\Z)$, its $q$-analogue $M_q$ is obtained by replacing $T$ by $T_q$ and $S$ by $S_q$ in the expansion of $M$ as a finite product of generators $T$ and $S$. The result does not depend on the choice of representing $M$ as product of $T$ and $S$.

\begin{definition}[\cite{MGO-2020, BBL}]\label{def:q-numbers}
The \emph{right} and \emph{left} {$q$-versions} of a number $x \in \mathbb{QP}^1$ are rational functions in $q$ with integer coefficients, denoted $[x]_q^{\sharp}, [x]_q^{\flat}$ respectively. They are defined as
\begin{equation}\label{defi:right_q_numbers}
\left[x\right]^{\sharp}_q := M_q \cdot \frac{1}{0} \;\;\;\;\;\; \textit{and} \;\;\;\;\;\;
\left[x\right]^{\flat}_q := M_q \cdot \frac{1}{1-q},
\end{equation}
\noindent where $M\in \PSL_2(\Z)$ is any map such that $M\cdot \frac{1}{0} = x$ with $\PSL_2(\Z)$ acting by M\"obius transformations.
\end{definition}

Each of these two deformations of rationals is defined as the orbit by the $q$-deformed group of one of the two fixed points of $T_q$. When $q=1$, the two deformations merge into one: $[x]_1^{\sharp}=[x]_1^{\flat}=x$ for all $x \in \mathbb{QP}^1$. The left version $[\cdot]^{\flat}_q$ was first mentioned in \cite[Remark 3.2]{MGO-2022}, and subsequently studied by Bapat-Becker-Licata in \cite{BBL}. This left version gives a non-standard deformation of integers: for $n \in \mathbb{N}_{\geq 2}$ we get
$$
[n]^{\flat}_q = q^n+q^{n-2}+\ldots+1 \, .
$$

\smallskip
Let us remark that Definition \ref{def:q-numbers} depends on the $q$-deformation of the representation of the modular group. Our goal here is to initiate a discussion on the unicity of $q$-rationals and expose a new family of deformations, different from the standard right and left versions of $q$-rationals, but delivering the standard $q$-integers \eqref{Eq:def-n-q}. 
Our main tool is the $\SL_2(\C)$-character variety of the modular group, describing all ismorphism classes of representations. This is why we specify to $q\in\C$.

\begin{fakethm}[Corollary \ref{Coro:unicity-q-deformation}]\label{Fakethm:A}
There are exactly two deformations $\mathbb{QP}^1\to \mathbb{CP}^1$, which are equivariant with respect to the modular group $\PSL_2(\Z)$ and coinciding on $\Z$ with the usual $q$-integers $[n]_q=\frac{1-q^n}{1-q}$, with $q\in\C$.
\end{fakethm}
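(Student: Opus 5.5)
Our approach is to reduce the statement to a classification of representations $\rho:\PSL_2(\Z)\to\PGL_2(\C)$ together with a base point. Suppose $f:\mathbb{QP}^1\to\mathbb{CP}^1$ is equivariant, meaning $f(M\cdot x)=\rho(M)\cdot f(x)$ for some representation $\rho$ of the modular group, and suppose $f(n)=[n]_q$ for all $n\in\Z$. Since $\PSL_2(\Z)$ acts transitively on $\mathbb{QP}^1$, the map $f$ is determined by $\rho$ and by $f(\infty)$. It therefore suffices to determine every pair $(\rho(T),\rho(S))$ compatible with the data on $\Z$.

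\textbf{Step 1: determine $\rho(T)$ and $f(\infty)$.} Equivariance under $T$ gives $\rho(T)\cdot[n]_q=[n+1]_q$ for all $n$. For generic $q$, the values $[n]_q$ are pairwise distinct, so a Möbius map is determined by its values at three of them. The map $z\mapsto qz+1$ sends $[n]_q$ to $[n+1]_q$, hence $\rho(T)=T_q$. Next, $\infty$ is fixed by $T$, so $f(\infty)$ must be a fixed point of $T_q$. The fixed points of $T_q$ are $\infty$ and $\frac{1}{1-q}$. These are exactly the two base points in Definition \ref{def:q-numbers}, and this is the source of the count ``two''.

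\textbf{Step 2: determine $\rho(S)$.} Write $\rho(S)=A$, an involution in $\PGL_2(\C)$ with $(T_qA)^3=1$. We also have $S\cdot 0=\infty$ and $S\cdot\infty=0$, and $f(0)=[0]_q=0$. Hence $A$ swaps $0$ and $f(\infty)$.

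If $f(\infty)=\infty$, then $A=\begin{pmatrix}0&b\\c&0\end{pmatrix}$. Imposing $(T_qA)^3=1$ projectively, for instance through the trace condition $\tr^2=\det$ for an element of order $3$, forces $-b/c=1/q$. So $A=S_q$ and $f=[\cdot]^\sharp_q$.

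If $f(\infty)=\frac1{1-q}$, then $A$ is the involution exchanging $0$ and $\frac1{1-q}$. Such involutions form a one-parameter family, and the order-3 condition again pins down a finite set of possibilities. We then check them against the remaining integer data, for example $f(-1)=[-1]_q=-q^{-1}$, using $S\cdot 1=-1$. I expect exactly one to survive; it should be the conjugate of $S_q$ coming from the character-variety uniqueness result.

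\textbf{Step 3: consistency.} With $\rho$ and the base point fixed, we check that $x\mapsto\rho(M)\cdot f(\infty)$ is well defined on $\mathbb{QP}^1$. The stabiliser of $\infty$ is generated by $T$ and $-1$, and $f(\infty)$ is fixed by $\rho(T)$, so the map is well defined. We also check that it restricts to $[n]_q$ on $\Z$. This second check uses $\rho(T^n)=T_q^n$ together with the identity $T_q^n\cdot f(\infty)=[n]_q$.

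\textbf{Main obstacle.} The last identity is the crux. For the base point $f(\infty)=\infty$ we get $T_q^n\cdot\infty=\infty$, not $[n]_q$. So the integer $n$ must be realised as $M\cdot\infty$ with $M=T^nS$, giving $f(n)=T_q^nA\cdot f(\infty)$. This is why Step 2 must require $A$ to swap $f(\infty)$ with $0$.

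In the second case, the requirement that $A$ send $\frac1{1-q}$ to $0$ is the constraint that distinguishes the new deformation from $[\cdot]^\flat_q$. Indeed, $[\cdot]^\flat_q$ uses $S_q$, which sends $\frac1{1-q}$ to $q-1\neq 0$, and so yields the nonstandard integers.

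Solving the quadratic order-3 condition in this second case, and confirming that only one root gives a genuine representation for generic $q$, is the step I expect to need the most care. Along the way we must exclude degenerate solutions where $T_qA$ is parabolic rather than of order $3$.
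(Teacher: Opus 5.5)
Your skeleton is the paper's. The integer data force $\varrho(T)=T_q$, the base point must be one of the two fixed points $\infty$, $\frac{1}{1-q}$ of $T_q$, and $\varrho(S)$ must send it to $0$. The paper imposes these constraints in the opposite order. It first solves $S^2=(T_qS)^3=1$ with $T_q$ fixed, which gives the family $S_{q,t}$ of Theorem~\ref{thm:unicity}. It then reads off $t=1$ from $S_{q,t}(\infty)=0$, and $t=0$ from $S_{q,t}(\frac{1}{1-q})=0$. Your order (swap first, then the relation) is more economical for this corollary and works for each fixed generic $q$ without the normalisation $\varrho_1(S)=S$. The paper's order has the advantage of producing the whole $(q,t)$-family, which it uses later. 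Your Case~1 is correct.

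The gap is Case~2, which is exactly where ``exactly two'' is decided. You only say you expect one root to survive, and the tools you propose for finishing would not settle it. Testing $f(-1)=[-1]_q$ cannot discriminate between candidates. As your own Step~3 shows, every involution $A$ that swaps $0$ and $f(\infty)$ and satisfies $(T_qA)^3=1$ reproduces all $q$-integers automatically. Parabolic solutions cannot occur either: for an invertible matrix, $\tr^2=\det$ already forces the eigenvalue ratio to be a primitive cube root of unity.

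The missing computation is short. An involution sending $0$ to $\frac{1}{1-q}$ has nonzero diagonal entries, since otherwise it would send $0$ to $\infty$. It can therefore be normalised as $A=\left(\begin{smallmatrix}1-q&-1\\ c&q-1\end{smallmatrix}\right)$. Then $\tr(T_qA)=c-(q-1)^2=\det A$ and $\det(T_qA)=q\det A$, so the order-three condition becomes $\det A\,(\det A-q)=0$.
\begin{itemize}
\item The root $\det A=0$ is the degenerate one: $A$ is then nilpotent and not an element of $\PGL_2(\C)$.
\item The other root gives $c=q^2-q+1$, i.e.\ $A=S_{q,0}$.
\end{itemize}

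The same spurious root sits in the paper's family at $t=-q/(q-1)^2$. There $S_{q,t}(\frac{1}{1-q})=0$, but $\det S_{q,t}=(q+(q-1)^2t)^2/q$ vanishes, so the paper's ``iff $t=0$'' tacitly relies on invertibility as well. One small slip: $S_q\cdot\frac{1}{1-q}=\frac{q-1}{q}$, not $q-1$ (it is still nonzero, so your point stands).
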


%Our main tool is the character variety, describing all ismorphism classes of representations of a given group. 
%We mostly concentrate on the case where $q$ is specialized to some complex number $q\in \C$, where the description of character varieties is concrete.

In Section \ref{Sec:unicity-and-chi} we use the character variety approach in order to prove that any non-abelian representation $ \varrho : \PSL_2(\Z) \rightarrow \PSL_2(\C)$ of the modular group is conjugated to the one given in \eqref{eq:q_representation}, for some $q \in \C$, see Theorem \ref{thm:unicity-conj}. 
Even if conjugated, these different representations are still interesting from the combinatorial point of view, as we argue in Section \ref{Sec:q-t}. 
By fixing $\varrho(T)=T_q$, which ensures that $q$-integers stay unchanged, we search for all possible $\varrho(S)$ and obtain a $2$-parameter family of representations $\varrho_{q,t}$ of the modular group. Finally, even though the parameter $t$ may seem unnecessary (since we show that any $\varrho_{q,t}$ is conjugated to $\varrho_{q,1}$, with $\varrho_{q,1}(S)=S_q$), we show that for $t=0$, the left version of integers for the representation $\varrho_{q,0}$ coincides with the classical one. This justifies its detailed combinatorial study. 

The generators of this new representation are:
\begin{equation}\label{eq:def_S_q_0}
T_{q} = \begin{pmatrix} q& 1\\ 0& 1\end{pmatrix}; \;\;\;\;
S_{q,0}=\begin{pmatrix} 1-q& -1\\ q^2-q+1& q-1\end{pmatrix}.
\end{equation}

In Section \ref{Sec:t-0} we study the positivity properties of the numerators and denominators of the corresponding rational fractions in $q$ of this new representation $\varrho_{q,0}$ and give the combinatorial interpretation of their coefficients.
A nice application of this deformation is a direct computation of the Jones polynomial for rational knots.

\smallskip
This work shows that there are only two ways to complete usual $q$-integers to deformations of rationals via the action of the modular group. 
Another direction is to change the symmetry group and to ask for deformations equivariant with respect to another discrete group, acting transitively on $\mathbb{QP}^1$. In upcoming work we will present other ways to deform rationals, preserving the usual $q$-integers, while replacing the modular group by Hecke and Coxeter groups.

\medskip
\noindent \textbf{Acknowledgements.}
We particularly thank Pierre-Louis Blayac, Sophie Morier-Genoud, Julien Marché, Valentin Ovsienko and Christopher-Lloyd Simon for stimulating discussions.

This work was supported by the PEPS JCJC 2026 - UMR 5208 (ICJ) by CNRS Mathematics.  P. J. thanks Institut Camille Jordan of University Lyon 1 for their hospitality.
O. P.-R. has been supported by the ANR grant GALS ANR-23-CE40-0001. 
A. T. has been supported by a BQR grant of Université Claude Bernard Lyon 1.

%%%%%%%%%%%%%%%%%%%%%%%%%%%%%%%%%%%%%%%%%%%%%%%%%%%%%%%%%%%%%%%%%%%%%%%%%%%%%
\section{Unicity of $q$-rationals and character varieties}\label{Sec:unicity-and-chi}
%%%%%%%%%%%%%%%%%%%%%%%%%%%%%%%%%%%%%%%%%%%%%%%%%%%%%%%%%%%%%%%%%%%%%%%%%%%%%

We discuss the space of all deformations of homomorphisms of the modular group into $\SL_2(\C)$. For that, the convenient tool is the concept of the character variety.

\subsection{Character varieties}

Character varieties describe the set of all isomorphism classes of representations of a (mostly discrete) group $\Gamma$ into some Lie group $G$. This representation-theoretic concept plays an important role in differential geometry, where representations appear as holonomy maps of geometric structures or monodromies of flat connections.

Here, we restrict to the following setting. Let $\Gamma$ be a finitely generated non-abelian group. We will mostly use $\Gamma=\mathrm{PSL}_2(\Z)$, the modular group. The target Lie group is $G=\mathrm{SL}_2(\C)$.
%, or the formal group $G=\GL_2(\Q(q))$. 
We then define
$$\chi(\Gamma,\mathrm{SL}_2(\C)):= \mathrm{Hom}(\Gamma,\mathrm{SL}_2(\C))/\mathrm{SL}_2(\C),$$
where we quotient out the conjugacy action. Note that we use a GIT quotient to get a Hausdorff topological space. This means that we identify two conjugacy classes of homomorphisms, if their closures intersect.

Recall that a representation $\varrho:\Gamma\to \mathrm{SL}_2(\C)$ is \emph{reducible} if its image $\varrho(\Gamma)$ fixes some 1-dimensional subspace, and \emph{irreducible} otherwise.
We give some well-known background,
%in the case of $G=\SL_2(\C)$
leading to the description of the character variety of the free group $F_2$. Our main references are \cite{Culler-Shalen, Goldman}. 

\smallskip

First, the representations are determined by their character, up to conjugacy. For the proof of this rigidity statement see \cite[Prop. 1.5.2]{Culler-Shalen}.

\begin{proposition}[\cite{Culler-Shalen}]\label{Prop:rep-rigidity}
Let $\varrho, \varrho':\Gamma\to \mathrm{SL}_2(\C)$ be two representations with the same character map. Assume further that $\varrho$ is irreducible. Then $[\varrho]=[\varrho']$ in $\chi(\Gamma,\SL_2(\C))$, i.e. $\varrho$ and $\varrho'$ are conjugated.
\end{proposition}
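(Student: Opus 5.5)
The strategy is the classical one: an irreducible representation into $\mathrm{SL}_2(\C)$ is determined by its restriction to the full matrix algebra it spans, and traces pin down that restriction. Write $\chi_\varrho(\gamma)=\tr\varrho(\gamma)$ and let $A=\mathrm{span}_\C\,\varrho(\Gamma)\subset M_2(\C)$. Throughout we assume $\chi_\varrho=\chi_{\varrho'}$.

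First I show that $A=M_2(\C)$. The algebra $A$ has no invariant line, since such a line would be invariant under $\varrho(\Gamma)$. Over $\C$, Burnside's theorem then gives $A=M_2(\C)$. Concretely, I can pick $g,h\in\Gamma$ such that $1,\varrho(g),\varrho(h),\varrho(gh)$ are linearly independent. The obstruction to this is the vanishing of $\det$ of the Gram matrix of the trace form on these four elements, which equals $\tr[\varrho(g),\varrho(h)]-2$ up to sign. Irreducibility ensures that $\tr\varrho([g,h])\neq 2$ for some pair $g,h$; I take this classical fact from \cite{Culler-Shalen}.

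Next I define the map $\Phi:A\to M_2(\C)$ by $\sum a_i\varrho(\gamma_i)\mapsto\sum a_i\varrho'(\gamma_i)$. The main obstacle is showing that $\Phi$ is well defined. Suppose $\sum a_i\varrho(\gamma_i)=0$. Multiplying by $\varrho(\delta)$ and taking traces gives $\sum a_i\chi(\gamma_i\delta)=0$ for every $\delta$. Since $\chi=\chi_{\varrho'}$, the element $X'=\sum a_i\varrho'(\gamma_i)$ satisfies $\tr(X'\varrho'(\delta))=0$ for all $\delta$, so $X'$ lies in the orthogonal of $A'=\mathrm{span}\,\varrho'(\Gamma)$ for the trace form. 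Now the Gram matrix of $\varrho'$ on $1,g,h,gh$ equals that of $\varrho$, because its entries are traces of group elements. That Gram matrix is nondegenerate, so $A'=M_2(\C)$ as well. The trace form on $M_2(\C)$ is nondegenerate, hence $X'=0$. Thus $\Phi$ is a well-defined surjective algebra homomorphism $M_2(\C)\to M_2(\C)$, and it is multiplicative because $\varrho'$ is a homomorphism.

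Finally, $M_2(\C)$ is simple, so $\Phi$ is injective and therefore an automorphism. By Skolem–Noether, $\Phi=\mathrm{Ad}_P$ for some $P\in\mathrm{GL}_2(\C)$, and rescaling gives $P\in\SL_2(\C)$. Then $\varrho'=P\varrho P^{-1}$, so $[\varrho]=[\varrho']$.
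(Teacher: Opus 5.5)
Your proof is correct and is the standard argument behind this statement. The paper gives no proof of its own and only cites Culler--Shalen, where the result is obtained along these same lines: Burnside gives $\mathrm{span}\,\varrho(\Gamma)=M_2(\C)$, nondegeneracy of the trace form transfers this to $\varrho'$ and identifies the two algebra maps, and Skolem--Noether concludes.

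Two harmless slips. First, the Gram determinant of $1,\varrho(g),\varrho(h),\varrho(gh)$ is $-(\tr\varrho([g,h])-2)^2$ rather than $\pm(\tr\varrho([g,h])-2)$; only its vanishing matters, so nothing breaks. Second, you do not need the commutator fact at all. Any $\gamma_1,\dots,\gamma_4$ with $\varrho(\gamma_i)$ spanning $M_2(\C)$ have a nonsingular Gram matrix $(\chi(\gamma_i\gamma_j))$, and $\varrho'$ shares that matrix.
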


We restrict now to the case of a group $\Gamma$ with two generators, called $g$ and $h$. The proof of the following proposition can be found in Goldman \cite[Section 2.3]{Goldman}.

\begin{proposition}[\cite{Goldman}]\label{Prop:irrep}
A representation $\varrho:\Gamma\to \mathrm{SL}_2(\C)$ is reducible if and only if we have $$\tr\varrho(ghg^{-1}h^{-1})=2.$$
\end{proposition}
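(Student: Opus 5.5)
We prove the two implications separately, writing $A=\varrho(g)$, $B=\varrho(h)$ and using that $\Gamma$ is generated by $g,h$. Then $\varrho$ is reducible if and only if $A$ and $B$ have a common eigenvector in $\C^2$, since a line fixed by $A$ and $B$ is fixed by the whole image. So the statement reduces to: $\tr[A,B]=2$ if and only if $A,B\in\SL_2(\C)$ share an eigenvector. This is a purely matrix-theoretic statement.

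\emph{Reducible implies trace $2$.} Assume $A$ and $B$ preserve a common line. Conjugate so that this line is $\C e_1$. Then $A$ and $B$ are upper triangular. Their commutator is upper triangular, and its diagonal entries are $aa^{-1}bb^{-1}=1$ (and likewise for the other entry). Hence $[A,B]$ is unipotent and $\tr[A,B]=2$. This direction is routine.

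\emph{Trace $2$ implies reducible.} I would argue by contraposition, starting from the Fricke trace identity
$$\tr[A,B]=\tr(A)^2+\tr(B)^2+\tr(AB)^2-\tr(A)\tr(B)\tr(AB)-2,$$
which follows from Cayley--Hamilton, $X+X^{-1}=\tr(X)\,I$ in $\SL_2$. The direct approach is cleaner, though. Suppose $\tr[A,B]=2$ and assume $A\neq\pm I$; otherwise any eigenvector of $B$ works.

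\textbf{Case 1: $A$ diagonalizable.} Conjugate so that $A=\mathrm{diag}(\lambda,\lambda^{-1})$ with $\lambda\neq\pm1$, and write $B=\begin{pmatrix}a&b\\c&d\end{pmatrix}$. A direct computation gives
$$\tr(ABA^{-1}B^{-1})=ad-bc\,\lambda^2-bc\,\lambda^{-2}\cdot(\ldots)=2-bc(\lambda-\lambda^{-1})^2,$$
using $ad-bc=1$. So trace $2$ forces $bc=0$. Then $e_1$ or $e_2$ is a common eigenvector.

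\textbf{Case 2: $A$ parabolic.} Conjugate so that $A=\pm\begin{pmatrix}1&1\\0&1\end{pmatrix}$. The same computation gives $\tr[A,B]=2+c^2$. Trace $2$ forces $c=0$, so $e_1$ is a common eigenvector.

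The main (mild) obstacle is carrying out these two commutator trace computations correctly and making sure the case split (scalar, diagonalizable non-scalar, parabolic) is exhaustive in $\SL_2(\C)$. It is, by the Jordan normal form.
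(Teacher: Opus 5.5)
Your proof is correct. The paper gives no argument of its own for this proposition; it simply defers to \cite{Goldman}. Your text therefore supplies a self-contained proof rather than a different route to one.

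The reducible direction is fine: two upper triangular matrices in $\SL_2(\C)$ have a unipotent commutator.

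In Case 1 the correct intermediate step is
$$\tr(ABA^{-1}B^{-1}) = 2ad - bc(\lambda^2+\lambda^{-2}) = 2 - bc(\lambda-\lambda^{-1})^2,$$
obtained by substituting $ad = 1+bc$. The factor $(\lambda-\lambda^{-1})^2$ is nonzero exactly because $\lambda\neq\pm1$. Then $b=0$ gives the common eigenvector $e_2$ and $c=0$ gives $e_1$.

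In Case 2 the computation does give $2+c^2$. The sign of $A$ is irrelevant, since $[-A,B]=[A,B]$.

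The split into scalar, non-scalar diagonalizable and non-diagonalizable $A$ is exhaustive by Jordan form. Conjugating in $\mathrm{GL}_2(\C)$ rather than $\SL_2(\C)$ is harmless, because it preserves both traces and invariant lines.

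Two cosmetic fixes are needed:
\begin{itemize}
\item The expression you wrote in Case 1, $ad-bc\,\lambda^2-bc\,\lambda^{-2}\cdot(\ldots)$, is garbled and should be replaced by the display above.
\item The announced contrapositive argument via the Fricke trace identity is never used, so that sentence can be deleted.
\end{itemize}
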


\begin{Remark}
In the general setting, where $\Gamma$ is not restricted to have two generators, a representation $\varrho:\Gamma\to\mathrm{SL}_2(\C)$ is reducible if and only if $\tr\varrho(c)=2$ for all $c\in[\Gamma,\Gamma]$, see \cite[Lemma 1.2.1]{Culler-Shalen}.
\end{Remark}

The only conjugacy invariant function on $\mathrm{SL}_2(\C)$ is the trace. For $[\varrho]\in\chi(\Gamma,\SL_2(\C))$, put
\begin{equation}\label{Eq:trace-coord}
x=\tr\varrho(g) \;,\;\;y=\tr\varrho(h) \;\;\text{ and }\;\;z=\tr\varrho(gh).
\end{equation}

The trace identity $$\tr(A)\tr(B)=\tr(AB)+\tr(AB^{-1})$$
allows to express any trace $\tr(w)$ with $w\in\varrho(\Gamma)$ as a polynomial in $x, y, z$, see \cite{Goldman}. Indeed, write $w$ as a reduced word in $g,h$ and their inverses. If $w=gagb$ (with $a, b$ subwords), we can use the trace identity for $A=ga, B=gb$ to reduce to shorter expressions. If $w=gag^{-1}b$, we can use $A=ga, B=g^{-1}b$. The same holds for $w=hahb$ or $w=hah^{-1}b$. This leads to a word which has at most one element out of $\{g,g^{-1}\}$ and out of $\{h,h^{-1}\}$. Finally, we note that $\tr(\varrho(g))=\tr(\varrho(g^{-1}))$. 

A more detailed algorithm can be found in \cite[Section 2.2]{Goldman_Fricke}.

\begin{example}
Let us compute the trace of the commutator $tr [g, h]$. For a simpler writing, we omit the representation $\varrho(\cdot)$ in this example.

We use the trace identity with $A=gh$ and $B=g^{-1}h^{-1}$:
$$\tr [g, h]=\tr(ghg^{-1}h^{-1}) = \tr(gh)\tr(hg)-tr(ghhg)=z^2-\tr(g^2h^2).$$
Further, 
$$\tr(g^2h^2)=\tr(g^2h)\tr(h)-tr(g^2hh^{-1})=y\tr(g^2h)-\tr(g^2).$$
Then $\tr(g^2)=\tr(g)^2-\tr(gg^{-1})=x^2-2$ and $$\tr(g^2h)=\tr(hg)\tr(g)-\tr(hgg^{-1})=xz-y.$$ Putting all together, we get
$$\tr(ghg^{-1}h^{-1}) = x^2+y^2+z^2-xyz-2.$$
\end{example}

\begin{proposition}
Consider $\varrho:\Gamma\to\SL_2(\C)$ and put $k=\tr\varrho(ghg^{-1}h^{-1})$ and $x,y,z$ as in Equation \eqref{Eq:trace-coord}.

If $k\neq 2$, then any $\varrho'$ with trace coordinates $(x,y,z)$ is conjugate to $\varrho$.

If $k=2$, then there is a representation $\varrho_0$ such that the closure of $\SL_2(\C).\varrho_0$ contains all other representations with $k=2$ and trace coordinates $(x,y,z)$.
\end{proposition}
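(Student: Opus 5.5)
The first assertion follows from the results already stated. If $k\neq 2$, Proposition \ref{Prop:irrep} says $\varrho$ is irreducible. By the trace-reduction algorithm described above, every trace $\tr\varrho'(w)$ is a polynomial in $x,y,z$ alone. So any $\varrho'$ with the same coordinates $(x,y,z)$ has the same character map as $\varrho$. Proposition \ref{Prop:rep-rigidity} then gives that $\varrho'$ is conjugate to $\varrho$.

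For $k=2$, the same proposition shows that every $\varrho'$ with trace coordinates $(x,y,z)$ is reducible. I will put all of them in a normal form. Choose $a,b\in\C^*$ with $a+a^{-1}=x$ and $b+b^{-1}=y$. The identity $x^2+y^2+z^2-xyz-2=2$ from the Example forces $z\in\{ab+(ab)^{-1},\,ab^{-1}+a^{-1}b\}$. Replacing $b$ by $b^{-1}$ if necessary, I may assume $z=ab+(ab)^{-1}$. Conjugating so that the invariant line is $\C e_1$ gives
$$\varrho'(g)=\begin{pmatrix} a&u\\0&a^{-1}\end{pmatrix},\qquad \varrho'(h)=\begin{pmatrix} b&v\\0&b^{-1}\end{pmatrix}.$$
If the invariant line carries the other eigenvalues, the roles of $(a,b)$ and $(a^{-1},b^{-1})$ are swapped.

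Next I classify orbits within these forms and choose $\varrho_0$. Conjugating by $\mathrm{diag}(\lambda,\lambda^{-1})$ rescales $(u,v)$ by $\lambda^2$. Conjugating by a unipotent upper triangular matrix shifts $(u,v)$ by a multiple of $(a-a^{-1},\,b-b^{-1})$. Hence the orbits are the diagonal one, $(u,v)=(0,0)$, together with the classes of $(u,v)$ modulo this line and scaling. I take $\varrho_0$ to be a non-split representative, i.e. $(u,v)$ not proportional to $(a-a^{-1},b-b^{-1})$.

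To prove the containment in the direction the statement requires, I will show that any other $\varrho'$ with $k=2$ and the same traces is a limit of conjugates $P_n\varrho_0P_n^{-1}$. When $\varrho'$ lies in the same normal form, I will build $P_n$ from diagonal and unipotent factors and choose the parameters so that $(u,v)$ is carried to the class of $\varrho'$. For the split (diagonal) $\varrho'$, the sequence $\mathrm{diag}(\lambda_n,\lambda_n^{-1})$ with $\lambda_n\to0$ sends $(u,v)$ to $0$, so $\varrho'$ lies in $\overline{\SL_2(\C).\varrho_0}$.

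The main obstacle is the remaining case: representations whose invariant line carries the opposite eigenvalues, the lower-triangular non-split class. When $a\neq\pm1$ or $b\neq\pm1$, these can form a second non-closed orbit, which a naive degeneration from $\varrho_0$ does not reach. I would first look for a conjugating sequence that exchanges the two eigenlines in the limit. If that fails, I will prove the statement in the sense used in the definition of the GIT quotient. There, orbits are identified when their closures meet, and each non-split orbit closure contains the diagonal representation by the degeneration above. So all representations with $k=2$ and coordinates $(x,y,z)$ represent the single point $[\varrho_0]$ of $\chi(\Gamma,\SL_2(\C))$. I will then make explicit that this GIT identification is the precise content of the second assertion.
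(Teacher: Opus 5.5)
Your argument for $k\neq 2$ is the paper's. For $k=2$ your main plan cannot be completed, and the reason is not technical. Read literally, the second assertion is false for \emph{every} choice of $\varrho_0$, so the sequence ``exchanging the two eigenlines'' you hope to find does not exist. Suppose $\varrho$ has an invariant line $L$ on which $g,h$ act by $(a,b)$. Then every limit of conjugates $P_n\varrho P_n^{-1}$ has such a line: take unit vectors in $P_nL$ and extract a convergent subsequence. A non-split reducible representation has exactly one invariant line. So when $(a,b)\neq(a^{-1},b^{-1})$, there are two non-split orbits, one whose invariant line carries $(a,b)$ and one whose line carries $(a^{-1},b^{-1})$. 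Both are non-empty, because $(a-a^{-1},b-b^{-1})\neq 0$, and neither lies in the closure of the other.

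When $a,b\in\{\pm1\}$, even your ``same normal form'' step fails. For instance, take $x=y=z=2$. The unipotent factors then act trivially. Consider $g\mapsto\left(\begin{smallmatrix}1&u\\0&1\end{smallmatrix}\right)$, $h\mapsto\left(\begin{smallmatrix}1&v\\0&1\end{smallmatrix}\right)$. The condition $v(\varrho'(g)-\mathrm{Id})=u(\varrho'(h)-\mathrm{Id})$ is closed and conjugation-invariant, so it holds on the whole orbit closure. Hence there is one non-closed orbit for each $[u:v]\in\mathbb{P}^1$, and each closure only adds the trivial representation. No choice of $(u,v)$ can be ``carried to the class of $\varrho'$''.

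What is true, and what the GIT quotient actually needs, is the reverse inclusion. Take $\varrho_0$ to be the \emph{diagonal} representation. Then $\varrho_0\in\overline{\SL_2(\C).\varrho'}$ for every $\varrho'$ with $k=2$ and coordinates $(x,y,z)$. This is your fallback, and it is exactly what the paper's proof computes. Put $\varrho'$ in upper triangular form, conjugate by $\mathrm{diag}(e^{-t},e^{t})$, and let $t\to\infty$. The permutation matrix handles the case where the invariant line carries $(a^{-1},b^{-1})$.

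Note that the paper's proof words its conclusion backwards: ``$\varrho$ belongs to the closure of the conjugacy orbit of $\varrho_0$''. The orbit of the diagonal representation is closed, so that sentence is false as written; the degeneration $g_t\varrho g_t^{-1}\to\varrho_0$ proves $\varrho_0\in\overline{\SL_2(\C).\varrho}$.

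So drop the non-split $\varrho_0$ and the search for exchanging sequences, and take $\varrho_0$ diagonal. Then state explicitly that you prove the corrected assertion $\varrho_0\in\overline{\SL_2(\C).\varrho'}$, i.e.\ that the whole fiber is the single point $[\varrho_0]$ of $\chi(\Gamma,\SL_2(\C))$. Your degeneration argument already goes in this direction.
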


The proof is a combination of the rigidity of characters for irreducible representations, and the characterization of reducible representations.

\begin{proof}
Consider first the case where $k\neq 2$. By Proposition \ref{Prop:irrep}, this implies that $\varrho$ is irreducible. Take $\varrho'$ with the same coordinates $(x,y,z)$. By the trace identities, this implies that $\tr(\varrho(c))=\tr(\varrho'(c))$ for any $c\in\Gamma$. By Proposition \ref{Prop:rep-rigidity}, this implies that $\varrho$ and $\varrho'$ are conjugated.

Second, consider the case where $k=2$. Then $\varrho$ is reducible. Hence we can conjugate $\varrho$ such that
\begin{equation}\label{Eq:aux-rho}
\begin{aligned}
\varrho(g)=\begin{pmatrix} a & b \\ 0 & a^{-1}\end{pmatrix}\;\;\text{ and }\;\;\varrho(h)=\begin{pmatrix} c & d \\ 0 & c^{-1}\end{pmatrix},\\
\textit {where} \; \;  x=a+a^{-1}, y=c+c^{-1} \; \textit{and} \; \;  z=ac+(ac)^{-1}.
\end{aligned}
\end{equation}

Given $(x,y,z)$, these equations determine $(a,c)$ up to $(a^{-1},c^{-1})$, which is conjugate to $(a,c)$ using a permutation matrix.

Define the representation $\varrho_0$ by its values on generators :
$$\varrho_0(g)=\begin{pmatrix} a & 0 \\ 0 & a^{-1}\end{pmatrix}\;\;\text{ and }\;\;\varrho_0(h)=\begin{pmatrix} c & 0 \\ 0 & c^{-1}\end{pmatrix}.$$
Using a conjugacy by the diagonal matrix $g_t=\mathrm{diag}(e^{-t}, e^{t})$, we see that $g_t\varrho g_t^{-1}$ tends to $\varrho_0$ when $t$ tends to $\infty$. Hence $\varrho$ belongs to the closure of the conjugacy orbit of $\varrho_0$. Since any representation with $k=2$ and trace coordinates $(x,y,z)$ can be put into the form \eqref{Eq:aux-rho}, they are all in the closure of the orbit of $\varrho_0$.
\end{proof}

A direct consequence of the above proposition gives the following well-known result (see for instance \cite[Theorem A]{Goldman} and references therein):
\begin{coro}\label{Coro:chi-F2}
The character variety of the free group in two generators $F_2=\langle g,h\rangle$ is
$$\chi(F_2, \SL_2(\C))\cong \C^3,$$
where the isomorphism is given by $[\varrho]\mapsto (\tr\varrho(g),\tr\varrho(h),\tr\varrho(gh))$.
\end{coro}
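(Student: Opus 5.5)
The map $\Phi:[\varrho]\mapsto(\tr\varrho(g),\tr\varrho(h),\tr\varrho(gh))$ is well defined on the GIT quotient, because traces are conjugation invariant and continuous, hence constant on orbit closures. Since $F_2$ is free, $\mathrm{Hom}(F_2,\SL_2(\C))\cong\SL_2(\C)^2$ and any pair of matrices defines a representation. The proof therefore has two parts: show that $\Phi$ is surjective, and show that $\Phi$ is injective on $\chi(F_2,\SL_2(\C))$. The algebraic statement, that $\Phi$ is an isomorphism of varieties, would additionally need the fact that the invariant ring $\C[\SL_2(\C)^2]^{\SL_2(\C)}$ is generated by $x,y,z$. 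This is the Fricke–Vogt theorem, which we can cite from \cite{Goldman}. The trace reduction algorithm described above already shows that every trace function is a polynomial in $x,y,z$. I will concentrate on the bijection.

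\emph{Surjectivity.} Given $(x,y,z)\in\C^3$, I would write down an explicit pair, for instance
$$\varrho(g)=\begin{pmatrix} x & -1\\ 1 & 0\end{pmatrix},\qquad \varrho(h)=\begin{pmatrix} 0 & s\\ -s^{-1} & y\end{pmatrix}.$$
Both matrices have determinant $1$. One has $\tr\varrho(g)=x$ and $\tr\varrho(h)=y$, and a direct computation gives $\tr\varrho(gh)=s+s^{-1}$. Choosing $s$ with $s+s^{-1}=z$, which is always possible for $s\in\C^*$, we obtain $\Phi([\varrho])=(x,y,z)$. I will double check this computation, and if needed adjust the signs in the off-diagonal entries.

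\emph{Injectivity.} Suppose $[\varrho]$ and $[\varrho']$ have the same image $(x,y,z)$, and set $k=x^2+y^2+z^2-xyz-2$, computed in the Example. By that computation, $k$ is the trace of the commutator for both representations. If $k\neq 2$, the preceding proposition says directly that $\varrho'$ is conjugate to $\varrho$, so $[\varrho]=[\varrho']$. If $k=2$, both representations are reducible. The preceding proposition then provides a single diagonal representation $\varrho_0$, determined by $(x,y,z)$ up to conjugacy by the permutation matrix. Both $\varrho$ and $\varrho'$ lie in the closure of the orbit of $\varrho_0$. In the GIT quotient, classes whose orbit closures intersect are identified, so $[\varrho]=[\varrho_0]=[\varrho']$.

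The main obstacle is the reducible case. One must verify that $\varrho_0$ really depends only on $(x,y,z)$, i.e.\ that the choice of $(a,c)$ is unique up to simultaneous inversion. Equivalently, the equations $a+a^{-1}=x$, $c+c^{-1}=y$ and $ac+(ac)^{-1}=z$ must exclude the mixed choice $(a,c^{-1})$ unless it coincides with $(a,c)$. To check this, note that $(a,c^{-1})$ gives $ac^{-1}+a^{-1}c$, and this equals $ac+a^{-1}c^{-1}$ exactly when $(a-a^{-1})(c-c^{-1})=0$. In that degenerate case $a=\pm1$ or $c=\pm1$, so the two diagonal representations coincide up to conjugation. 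This completes the bijection.
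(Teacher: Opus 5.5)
Your argument is correct and follows the paper's route. The paper deduces the corollary directly from the preceding proposition (citing Goldman), and you do the same for injectivity; in addition you supply two points the paper leaves implicit: surjectivity (your explicit pair does give $\tr\varrho(gh)=s+s^{-1}$) and the check that $(a,c)$ is determined up to simultaneous inversion.

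One small correction, which you inherited from the paper's proposition. Conjugating by $g_t$ shows that $\varrho_0$ lies in the closure of the orbit of $\varrho$, not the other way round, since the orbit of the diagonal $\varrho_0$ is closed. With the direction fixed, the orbit closures of $\varrho$ and $\varrho'$ both contain a conjugate of $\varrho_0$, so $[\varrho]=[\varrho']$ still follows.
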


\begin{Remark}
By \cite{Saito} (Theorem A and B), the formal version of the previous Corollary still holds: 
$$\chi(F_2, \mathrm{GL}_2(\Q(q))) \cong \Q(q)^3.$$
\end{Remark}

In the applications to $q$-rationals, we will consider the modular group $\Gamma=\PSL_2(\Z)$, and representations into $\PSL_2(\C)$. We will need the following simple fact:

\begin{proposition}\label{Prop:lift-order}
Let $n\geq 2$ be even. If $A^n=1$ in $\PSL_2(\C)$, where  $n$ is the minimal order of $A$, then $\tilde{A}^n = -1$ in $\SL_2(\C)$ for any lift $\tilde{A}$ of $A$.
\end{proposition}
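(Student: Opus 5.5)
Fix a lift $\tilde A\in\SL_2(\C)$ of $A$. Then $A^n=1$ in $\PSL_2(\C)$ means $\tilde A^n=\pm 1$, so it suffices to exclude $\tilde A^n=1$. Note also that the other lift is $-\tilde A$, and $(-\tilde A)^n=\tilde A^n$ because $n$ is even; hence the conclusion does not depend on the choice of lift, and it is enough to treat one.

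The plan is to use the Jordan form of $\tilde A$. First, $\tilde A$ cannot be a nontrivial unipotent up to sign: if $\tilde A=\pm\begin{pmatrix}1&b\\0&1\end{pmatrix}$ with $b\neq 0$, then $\tilde A^n=\pm\begin{pmatrix}1&nb\\0&1\end{pmatrix}\neq\pm1$. This would contradict $A^n=1$. Therefore $\tilde A$ is diagonalizable, and we may conjugate it to $\mathrm{diag}(\lambda,\lambda^{-1})$. The condition $\tilde A^n=\pm1$ then says $\lambda^n=\pm1$. In other words, $\lambda^{2n}=1$, so $\lambda=e^{i\pi k/n}$ for some integer $k$.

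Minimality of $n$ in $\PSL_2(\C)$ translates as follows: for every $0<m<n$, we have $\tilde A^m\neq\pm1$, that is, $\lambda^{2m}\neq1$. So $\lambda^2$ is a primitive $n$-th root of unity. Now suppose $\lambda^n=1$. Since $n$ is even, $(\lambda^2)^{n/2}=\lambda^n=1$ with $0<n/2<n$, which contradicts the primitivity of $\lambda^2$. Hence $\lambda^n=-1$, and so $\tilde A^n=-1$.

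The only point needing care is the non-diagonalizable case, which is dispatched by the unipotent computation above. It is also here that evenness of $n$ is genuinely used: for odd $n$ one can choose a lift with $\tilde A^n=1$.
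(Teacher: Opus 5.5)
Your proof is correct and follows essentially the paper's route: diagonalize $\tilde A$ as $\mathrm{diag}(\lambda,\lambda^{-1})$, then note that $\lambda^n=1$ would make the projective order (governed by $\lambda^2$, i.e.\ the paper's representative $\mathrm{diag}(\lambda^2,1)$) divide $n/2$. The differences are cosmetic. The paper obtains diagonalizability from the minimal polynomial dividing $X^n-1$, which has simple roots, whereas you exclude the unipotent Jordan block by direct computation. Also, evenness enters in your final $n/2$ step, not in the non-diagonalizable case as your closing remark could suggest.
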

\begin{proof}
Suppose that $\tilde{A}^n=1$ in $\SL_2(\C)$. Since the minimal polynomial of $\tilde{A}$ divides $X^n-1$ which has simple roots, $\tilde{A}$ is diagonalisable. For $\tilde{A}=\mathrm{diag}(\lambda,\lambda^{-1})$, one representative of $A$ is given by $\mathrm{diag}(\lambda^2,1)$, which is of order $n/2$, a contradiction. Therefore $\tilde{A}^n=-1$.
\end{proof}

\subsection{Unicity of $q$-rationals}

We are now ready to address the question of uniqueness of $q$-rationals. The main ingredient in Definition \ref{def:q-numbers} of $q$-rationals is the representation of the modular group inside $\PSL_2(\C)$. Classifying deformations of rational numbers equivariant with respect to the modular group is then equivalent of classifying representations of $\PSL_2(\Z)$ into $\PSL_2(\C)$.

The following shows that $q$-rationals are unique up to conjugacy. Despite this, the combinatorial aspects of $q$-rationals may be interesting to study for other representations, as we will see in the subsequent section.

\begin{theorem}\label{thm:unicity-conj}
Let $\varrho:\PSL_2(\Z)\to\PSL_2(\C)$ be a non-abelian representation of the modular group. Then 
it is equivalent, up to conjugacy, to one of the representations \eqref{eq:q_representation} for a certain $q\in \C$.
\end{theorem}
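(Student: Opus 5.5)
Let $\varrho:\PSL_2(\Z)\to\PSL_2(\C)$ be non-abelian. Write $A=\varrho(S)$ and $B=\varrho(TS)$, so that $A^2=B^3=1$ in $\PSL_2(\C)$, and let $g$, $h$ be the images of $S$ and $TS$ in the presentation. The plan is to lift the representation to $\SL_2(\C)$, pin down its trace coordinates from Corollary \ref{Coro:chi-F2} up to one free parameter, and then use Proposition \ref{Prop:rep-rigidity} to conjugate it to the representation \eqref{eq:q_representation}.

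\textbf{Step 1: lifting and two of the traces.} Since $\varrho$ is non-abelian, neither $A$ nor $B$ is trivial, so $A$ has order exactly $2$ and $B$ has order exactly $3$. Choose lifts $\tilde A,\tilde B\in\SL_2(\C)$. By Proposition \ref{Prop:lift-order}, $\tilde A^2=-1$, so $\tilde A$ has eigenvalues $\pm i$ and $\tr\tilde A=0$. For $\tilde B$ we know $\tilde B^3=\pm1$. Replacing $\tilde B$ by $-\tilde B$ if needed, we may assume $\tilde B^3=-1$. Then $\tilde B\neq -1$, since $-1$ would project to the identity, so $\tilde B$ has eigenvalues $e^{\pm i\pi/3}$ and $\tr\tilde B=1$. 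The lift therefore defines a representation of the free group $F_2=\langle g,h\rangle$ with trace coordinates $(x,y,z)=(0,1,z)$, where $z=\tr(\tilde A\tilde B)$ is free.

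\textbf{Step 2: irreducibility.} The example computing the commutator trace gives $k=x^2+y^2+z^2-xyz-2=z^2-1$. We need $k\neq2$, i.e.\ $z^2\neq3$. Suppose instead that the lift is reducible. Then $\tilde A$ and $\tilde B$ are simultaneously upper triangular, so $\varrho$ factors modulo the unipotent radical, and its image lies in a solvable group. Its image $\langle A,B\rangle$ in $\PSL_2(\C)$ is generated by an element of order $2$ and an element of order $3$, both upper triangular. Elements of finite order $>1$ in the Borel subgroup are diagonalisable. One then shows that such $A$ and $B$ either commute or force the image to be abelian. I expect this to be the main obstacle: one must carefully exclude non-abelian reducible images. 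I would handle it by computing the commutator directly from the triangular forms, using $a=\pm i$ and $c=e^{\pm i\pi/3}$ on the diagonal. If this does not work, I would fall back on checking whether the hypothesis needs to be interpreted as ``irreducible'' (the case $z^2=3$) and treat that case separately.

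\textbf{Step 3: matching $q$.} Compute the traces for the standard representation. We have $\tr S_q=0$, and $T_qS_q=\begin{pmatrix} q&-q\\ q&0\end{pmatrix}$. After normalising to determinant $1$ (dividing by $q$), its trace is $\pm1$ and $\tr(S_qT_qS_q)$ normalises to a function $z(q)$. A computation gives $z(q)^2=(q+1)^2/q$ up to sign. As $q$ ranges over $\C^*$, the map $q\mapsto (q+1)^2/q=q+2+q^{-1}$ is surjective onto $\C$. Hence for every $z$ there is a $q\in\C^*$ with matching trace coordinates, after possibly adjusting signs of lifts, which does not change the class in $\PSL_2(\C)$. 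Since both lifts are irreducible with equal trace coordinates, the trace identities together with Proposition \ref{Prop:rep-rigidity} show they are conjugate in $\SL_2(\C)$. Projecting to $\PSL_2(\C)$ gives the theorem.
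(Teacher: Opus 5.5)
Your route (lift, pin down $\tr\tilde A=0$ and $\tr\tilde B=1$, match the remaining coordinate with $q$ through the surjective map $q\mapsto q+2+q^{-1}$, conclude by Proposition \ref{Prop:rep-rigidity}) is the paper's, up to using the generators $S,TS$ instead of $S,T$. The gap is Step 2. What you plan to prove there, that a reducible lift forces $\varrho$ to be abelian, is false, so that step cannot be completed. Your fallback of reading the hypothesis as ``irreducible'' only proves a weaker theorem.

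By Proposition \ref{Prop:irrep}, reducibility is exactly $z^2=3$, and non-abelian representations do occur there; the standard family itself provides them. For $q^2-q+1=0$ one has $z(q)^2=q+2+q^{-1}=3$. Concretely, $T_q$ fixes $\frac{1}{1-q}$, and $S_q\cdot\frac{1}{1-q}=\frac{q-1}{q}$ equals $\frac{1}{1-q}$ precisely when $q^2-q+1=0$. Yet $T_qS_q=\left(\begin{smallmatrix} q&-q\\ q&0\end{smallmatrix}\right)$ and $S_qT_q=\left(\begin{smallmatrix} 0&-1\\ q^2&q\end{smallmatrix}\right)$ are not proportional, so the image is non-abelian. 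On this fibre Proposition \ref{Prop:rep-rigidity} does not apply, and the conclusion of your Step 3 is unjustified.

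What is missing is a direct classification of this fibre.
\begin{itemize}
\item By Step 1, $A$ and $B$ have diagonalisable lifts with distinct eigenvalues, so each has exactly two fixed points on $\mathbb{CP}^1$.
\item Reducibility means they share a fixed point $p$. Non-abelian forces $p$ to be the only shared one, so $p$ and the other fixed points of $A$ and of $B$ are three distinct points, which you can send to $\infty,0,1$.
\item The pair is then determined by the multipliers at $p$: these are $-1$ for $A$ and $\omega^{\pm1}$, with $\omega=e^{2i\pi/3}$, for $B$.
\item So there are exactly two conjugacy classes of non-abelian reducible representations. They are distinguished by the multiplier of $B$ at the common fixed point, an invariant not seen by traces.
\item In the standard family at $q=e^{\pm i\pi/3}$, the common fixed point is $\frac{1}{1-q}$. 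There $T_q$ has multiplier $q$ and $S_q$ has multiplier $-1$, so $T_qS_q$ has multiplier $-q=\omega^{\mp1}$. Both classes occur, which finishes the proof.
\end{itemize}
These two values of $q$ have identical trace coordinates yet give non-conjugate representations. So no trace-matching argument like your Step 3 can settle this case; you must pick the right $q$ using the multiplier.

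The paper's proof reads conjugacy directly off $\chi(F_2,\SL_2(\C))\cong\C^3$ and does not treat this fibre separately either, so you were right to flag it. But the fix is this extra case analysis, not a proof that the case is empty.
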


The proof idea is that the modular group is generated by two elements, $S$ and $T$. Hence its character variety is described by trace coordinates. The relations among $S$ and $T$ fix all but one of these coordinates (the last coordinate being a function of $q$).

\begin{proof}
Consider a representation $\varrho:\PSL_2(\Z)\to\PSL_2(\C)$. Recall the presentations
\begin{align*}
    \PSL_2(\Z)&=\langle S,T\mid S^2=(TS)^3=1\rangle, \\
    \SL_2(\Z)&=\langle \tilde{S},\tilde{T}\mid \tilde{S}^4=1, (\tilde{T}\tilde{S})^3=\tilde{S}^2\rangle.
\end{align*}

We lift $\varrho(S)$ and $\varrho(T)$ to elements $\tilde{S}$ and $\tilde{T}$ in $\SL_2(\C)$, unique up to sign.
Since $\varrho(S)^2=1$, we get $\tilde{S}^2=\pm 1$. By Proposition \ref{Prop:lift-order}, we see that if $\tilde{S}^2= 1$, then $\varrho(S)=1$, hence $\varrho$ is abelian, which is excluded by the hypothesis. Hence $\tilde{S}^2=- 1$.

Since $\varrho(TS)^3=1$, we get $(\tilde{T}\tilde{S})^3=\pm 1$. Changing the sign of $\tilde{T}$ if necessary, we can impose $(\tilde{T}\tilde{S})^3=-1=\tilde{S}^2$. Hence we can extend $\tilde{S}$ and $\tilde{T}$ to a homomorphism $\tilde{\varrho}:\SL_2(Z)\to\SL_2(\C)$. We will write $\tilde{A}$ as abbreviation for $\tilde{\varrho}(\tilde{A})$, where $\tilde{A}\in\SL_2(\C)$.

By Corollary \ref{Coro:chi-F2}, we know $\tilde{\varrho}$ is described by the trace coordinates
$(\tr(\tilde{S}),\tr(\tilde{T}), \tr(\widetilde{TS})).$
Since $\tilde{S}^2=-1$, we get $\tr\tilde{S}=0$. From $(\tilde{T}\tilde{S})^3=-1$ and $x^3+1=(x+1)(x^2-x+1)$, we have either $\widetilde{TS}=-1$ or the characteristic polynomial of $\widetilde{TS}$ is $x^2-x+1$. In the first case, $\varrho$ is abelian, and in the second, we get $\tr\widetilde{TS}=1$.

Therefore the representation $\tilde{\varrho}$ is characterized by $\tr \tilde{T}$. For $\tilde{T}=\left(\begin{smallmatrix}q^{1/2}&q^{-1/2}\\ 0& q^{-1/2}\end{smallmatrix}\right)$, the normalized version of $T_q$, the trace is given by $q^{1/2}+q^{-1/2}$. This function is surjective on $\C$. Therefore, these deformations are the only ones inside $\PSL_2(\C)$ up to conjugacy.
\end{proof}

\begin{Remark}
The $\PSL_2(\R)$-character variety of the modular group has been described by Simon in \cite[Section 3.2]{Simon} and also in his PhD thesis \cite[Chapter 5]{Simon-thesis}.
\end{Remark}

\begin{coro}
The representations in \eqref{eq:q_representation} for the parameters $q$ and $q^{-1}$ are conjugated.
\end{coro}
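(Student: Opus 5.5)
The corollary follows from the trace-coordinate description used in the proof of Theorem \ref{thm:unicity-conj}. That proof shows that a non-abelian representation $\varrho:\PSL_2(\Z)\to\PSL_2(\C)$ lifts to $\tilde\varrho:\SL_2(\Z)\to\SL_2(\C)$ with $\tr\tilde S=0$ and $\tr\widetilde{TS}=1$. Consequently, by Corollary \ref{Coro:chi-F2}, its conjugacy class is determined by the single number $\tr\tilde T$. For the representation \eqref{eq:q_representation} with parameter $q$, the normalized lift is $\tilde T=q^{-1/2}T_q$, with trace $q^{1/2}+q^{-1/2}$. This expression is invariant under $q\mapsto q^{-1}$, provided we choose $(q^{-1})^{1/2}=(q^{1/2})^{-1}$. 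So the two lifts have identical trace coordinates $(0,\,q^{1/2}+q^{-1/2},\,1)$.

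The key steps are as follows.

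\emph{Step 1: normalized lifts.} For the parameter $q$, I would take $\tilde S=q^{-1/2}S_q$ and $\tilde T=q^{-1/2}T_q$, both of determinant $1$. I would then check that $\tr\tilde S=0$, $\tr\tilde T=q^{1/2}+q^{-1/2}$ and $\tr(\tilde T\tilde S)=1$. For the last trace, $T_qS_q=\left(\begin{smallmatrix} q & -q\\ q & 0\end{smallmatrix}\right)$, whose trace is $q$, and multiplying by $q^{-1}$ gives $1$. The sign of $\tilde T$ may need adjusting, but it is fixed once $\tr\widetilde{TS}=1$ is imposed. I would do the same computation for $q^{-1}$.

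\emph{Step 2: compare trace coordinates.} The two normalized lifts are both representations of $F_2=\langle \tilde S,\tilde T\rangle$ with equal trace coordinates. It remains to use that equal characters force conjugacy. This is immediate for irreducible representations, by Propositions \ref{Prop:irrep} and \ref{Prop:rep-rigidity}. So I would compute the commutator trace $k=x^2+y^2+z^2-xyz-2$ with $x=0$, $y=q^{1/2}+q^{-1/2}$, $z=1$. This gives
\[
k=(q^{1/2}+q^{-1/2})^2-1=q+1+q^{-1}.
\]
Hence $k\neq 2$ if and only if $q+q^{-1}\neq 1$. For such $q$, the lifts are conjugate in $\SL_2(\C)$, and hence the representations are conjugate in $\PSL_2(\C)$.

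\emph{Step 3: the reducible case.} The main obstacle is the exceptional case $q^2-q+1=0$, that is, $q$ a primitive sixth root of unity. There genuine conjugacy is not guaranteed by the character alone, since the GIT identification is weaker. Here $q^{-1}=\bar q$ is the other primitive sixth root, so I would treat this case directly. For $q\neq 1$, $T_q$ has distinct eigenvalues $q$ and $1$, and there is a common fixed line. I would compute an explicit conjugating matrix. A natural candidate is an antidiagonal or $S$-type matrix $P$ with $PT_qP^{-1}\sim T_{q^{-1}}$ projectively; note that $T_{q^{-1}}\sim q^{-1}T_q^{-1}$ up to conjugation by the inversion $x\mapsto -x$. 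I would check that this $P$ also sends $S_q$ to $S_{q^{-1}}$ projectively. If such a uniform $P$ works for all $q$, it would also give a direct proof avoiding Step 2 altogether. I would try $P=\left(\begin{smallmatrix} 1&0\\ 0&-q\end{smallmatrix}\right)$ composed with a suitable translation first.
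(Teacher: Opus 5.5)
Your Steps 1 and 2 follow the paper's route: the only trace invariant is $q^{1/2}+q^{-1/2}$, which is symmetric under $q\leftrightarrow q^{-1}$. They are also more careful than the paper's one-line ``by unicity''. Your computation $k=q+1+q^{-1}$ shows that Propositions \ref{Prop:irrep} and \ref{Prop:rep-rigidity} give genuine conjugacy exactly when $q^2-q+1\neq 0$, and for those $q$ your argument is complete. The gap is Step 3. It is only a plan, and it cannot be carried out, because for $q^2-q+1=0$ the two representations are \emph{not} conjugate. No choice of $P$ will work there.

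To see this, take $q=\omega=e^{i\pi/3}$, so $q^{-1}=\bar\omega$ and $1/(1-\omega)=\omega$. The fixed points of $T_\omega(z)=\omega z+1$ are $\infty$ and $\omega$. The map $S_\omega(z)=-1/(\omega z)$ sends $\infty$ to $0$ and fixes $\omega$. So $\omega$ is the \emph{unique} common fixed point of the representation with parameter $\omega$, and the multiplier of $T_\omega$ there is $T_\omega'(\omega)=\omega$. Symmetrically, $\bar\omega$ is the unique common fixed point of $T_{\bar\omega}$ and $S_{\bar\omega}$, and the multiplier of $T_{\bar\omega}$ there is $\bar\omega$. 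A conjugator would have to map one unique common fixed point to the other while preserving multipliers, which forces $\omega=\bar\omega$.

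So at the primitive sixth roots of unity you have two distinct non-split reducible representations. Their orbit closures both contain the same diagonal representation, so they define the same point of the character variety. That is all trace coordinates can detect, and it is all the palindromicity corollary needs. This matches the paper's explicit conjugator $g_q$ in the following Remark, which has $\det g_q=q^2-q+1$. The paper's own proof does not address this case either.

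Your specific affine candidate also fails for generic $q$. $T_q$ has multiplier $q^{-1}$ at $\infty$, while $T_{q^{-1}}$ has multiplier $q$ at $\infty$ and $q^{-1}$ at $q/(q-1)$. Hence any conjugator must send $\infty$ to $q/(q-1)$ whenever $q\neq\pm1$, so it cannot be affine.

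The correct conclusion of your Steps 1--2 is twofold: the representations are conjugate for $q^2-q+1\neq 0$, and for every $q$ they give the same point of the character variety.
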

This has already been noticed in the PhD thesis of Simon \cite[Corollary 5.7]{Simon-thesis}.

\begin{proof}
As seen above, the only invariant is $\mathrm{tr}(T_q) = q^{1/2}+q^{-1/2}$, which does not change when $q$ becomes $q^{-1}$. By unicity, the two representations have to be conjugated.
\end{proof}

\begin{Remark}
To be explicit, the conjugacy matrix, known as the \emph{transition map} from \cite[Section 2.2]{Thomas} and \cite[Section 1.3]{Jouteur}, is given by
$$g_q = \begin{pmatrix}q & 1-q \\ q-1 & 1\end{pmatrix}.$$
\end{Remark}

This viewpoint allows a very simple proof of the palindromicity property of traces of $q$-deformed elements of $\PSL_2(\Z)$, first noticed in \cite[Theorem 3]{Leclere-Morier-Genoud}.
\begin{coro}
Let $A_q\in\mathrm{PSL}_{2,q}(\Z)$. Then $\mathrm{tr}(A_q)$ is palindromic in $q$.
\end{coro}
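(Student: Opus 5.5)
The plan is to deduce palindromicity from the conjugacy between the representations for $q$ and $q^{-1}$ established in the previous corollary. Palindromicity has to be read with the correct normalization: it is a statement about the normalized trace in $\SL_2$. Write $A_q$ as a word in $T_q,S_q$. Lift to $\SL_2(\C[q^{\pm1/2}])$ using $\tilde T=q^{-1/2}T_q$ and $\tilde S=q^{-1/2}S_q$, both of determinant one. Then $\tr(A_q)$ equals $q^{m/2}$ times $\tr(\tilde A_q)$, where $m$ is the length of the word, and this holds up to sign. The claim to prove is therefore that $\tr(\tilde A_q)$ is invariant under $q\mapsto q^{-1}$, which is exactly the statement that the coefficient sequence of $\tr(A_q)$ is symmetric.

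\textbf{Step 1.} By Corollary~\ref{Coro:chi-F2} and the trace identity, $\tr(\tilde A_q)$ is a polynomial in the three trace coordinates $\tr\tilde S$, $\tr\tilde T$ and $\tr(\tilde T\tilde S)$. Concretely, I will reduce the word by repeated use of $\tr(A)\tr(B)=\tr(AB)+\tr(AB^{-1})$.

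\textbf{Step 2.} In the proof of Theorem~\ref{thm:unicity-conj} we computed $\tr\tilde S=0$ and $\tr(\tilde T\tilde S)=1$. These hold for every $q$, once the sign of $\tilde T$ is chosen so that $(\tilde T\tilde S)^3=-1$; I will check this directly for $q^{-1/2}T_q$ and $q^{-1/2}S_q$. It then follows that $\tr(\tilde A_q)=P(q^{1/2}+q^{-1/2})$ for some polynomial $P$ with integer coefficients. Since $q^{1/2}+q^{-1/2}$ is invariant under $q\mapsto q^{-1}$, so is $\tr(\tilde A_q)$.

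\textbf{Step 3.} Alternatively, I can invoke the conjugacy matrix $g_q$ of the preceding remark. The relation $g_q\varrho_q g_q^{-1}=\varrho_{q^{-1}}$ holds projectively, so traces of normalized lifts agree up to sign. This gives $\tr(\tilde A_q)=\pm\tr(\tilde A_{q^{-1}})$ directly.

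\textbf{Step 4.} Translate back to the unnormalized trace. From $\tr(A_q)=\pm q^{m/2}\tr(\tilde A_q)$ and the invariance of $\tr(\tilde A_q)$, we get $\tr(A_q)(q^{-1})=\pm q^{-m}\tr(A_q)(q)$. This is the palindromic property. Writing $\tr(A_q)$ as a Laurent polynomial, symmetry of its coefficients follows once the sign is shown to be $+$ and degrees are centered at $m/2$.

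The main obstacle is the sign and normalization bookkeeping. The element $A_q$ is only defined in $\PSL_2$, and different words for the same $A$ can change the scalar factor and the sign. The route through Step~2 avoids sign ambiguity entirely, because $P$ is a genuine function of $q^{1/2}+q^{-1/2}$ computed from a fixed word. I will therefore prove the result for the representative given by a chosen word, and note that changing the word only multiplies by $\pm q^k$, which preserves palindromicity.
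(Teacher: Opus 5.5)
Your proof is correct, and its main line (Step~2) takes a different route from the paper. The paper argues in one line: $\varrho_q$ and $\varrho_{q^{-1}}$ are conjugate by the previous corollary, hence $\tr(A_q)=\tr(A_{q^{-1}})$. That conjugacy comes from the unicity theorem, i.e.\ from the identification $\chi(F_2,\SL_2(\C))\cong\C^3$, and its injectivity uses the Culler--Shalen rigidity of irreducible characters.

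You use only the elementary half of this picture, the trace-identity reduction. It writes $\tr(\tilde A_q)$ as an integer polynomial in $(\tr\tilde S,\tr\tilde T,\tr\tilde T\tilde S)=(0,\,q^{1/2}+q^{-1/2},\,1)$. Your normalization makes this concrete, since $\tilde T\tilde S=\left(\begin{smallmatrix}1&-1\\1&0\end{smallmatrix}\right)$ for every $q$.

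What this buys you:
\begin{itemize}
\item No rigidity or irreducibility is needed.
\item The identity holds formally in $\Z[q^{\pm1/2}]$. The paper's pointwise conjugacy degenerates at $q^2-q+1=0$, where the representation is reducible and $\det g_q=0$, so there one has to appeal to continuity or GIT-invariance of traces.
\item You get the explicit form $\tr(A_q)=q^{m/2}P(q^{1/2}+q^{-1/2})$ for a positive word of length $m$. This pins down the center of symmetry. It also makes precise what the paper glosses over: the literal equality $\tr(A_q)=\tr(A_{q^{-1}})$ holds only for normalized lifts (e.g.\ $\tr T_q=q+1\neq\tr T_{q^{-1}}$).
\end{itemize}

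For a fixed positive word, $A_q=q^{m/2}\tilde A_q$ exactly. So the sign you carry in Step~4 never actually occurs, and $q^m\tr(A_q)(q^{-1})=\tr(A_q)(q)$ follows directly.

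Your Step~3 is essentially the paper's argument. As you state it, it only gives the identity up to a sign, which is why Step~2 is the better main line.
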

\begin{proof}
Since the representations for parameter $q$ and $q^{-1}$ are conjugated, we know that $A_q$ is conjugated to $A_{q^{-1}}$. Since the trace is invariant under conjugacy, we get $\mathrm{tr}(A_q) = \mathrm{tr}(A_{q^{-1}})$. This proves palindromicity.
\end{proof}

%%%%%%%%%%%%%%%%%%%%%%%%%%%%%%%%%%%%%%%%%%%%%%%%%%%%%%%%%%%%%%%%%%%%%%%%%%%%%
\section{Two parameter family of deformations}\label{Sec:q-t}
%%%%%%%%%%%%%%%%%%%%%%%%%%%%%%%%%%%%%%%%%%%%%%%%%%%%%%%%%%%%%%%%%%%%%%%%%%%%%

Although the deformation of $q$-rationals is unique up to conjugacy, there are interesting representatives, other than the one introduced by Morier-Genoud and Ovsienko.

Note that the standard map $T_q$ defined by the Equation \eqref{eq:q_representation} is the only map in $\PGL_2 (\mathbb{Z} (q))$ such that it preserves the set of $q$-integers and acts on it as a shift map:
$$ T_q [n]_q= q[n]_q+1 = [n+1]_q \; \; \textit{for all} \; \; n \in \Z.$$ 

Take any other representation of the modular group and fix $\varrho(T)=T_q$. Indeed, it can be put into this standard form via conjugacy. Let us now search for the explicit form of $\varrho(S)$ such that $T$ and $S$ generate the modular group. 

Even though all of the obtained representations are conjugated to the standard one, we will study their explicit form and their properties as rational functions of $q$ and of the additional parameter that we call $t$.

\begin{theorem}\label{thm:unicity}
 Consider a representation of the modular group via rational functions $\varrho_q(\PSL_2(\Z)) \to \PGL_2(\Z(q))$, such that $\varrho_q(T)=T_q$ and $\varrho_1(S)=S$. Then there is a one-parameter family of possible representations that can be made explicit by
\begin{equation}\label{Eq-Sqt}
\varrho_{q,t}(S)=S_{q,t}=\begin{pmatrix}(t-1)(q-1) & -\tfrac{1}{q}(q+(q-1)^2t^2)\\ q^2-q+1 & (t-1)(1-q)\end{pmatrix}.
\end{equation}
The standard $S_q$ as in \eqref{eq:q_representation} is recovered when $t=1$. 
\end{theorem}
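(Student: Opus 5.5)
We fix $\varrho(T)=T_q$ and look for every $S'=\varrho(S)=\begin{pmatrix} a & b\\ c & d\end{pmatrix}\in\PGL_2$ (entries rational in $q$) with $S'^2=1$ and $(T_qS')^3=1$ projectively, and with $S'$ specializing to $S$ at $q=1$. We translate these relations into algebraic equations in $a,b,c,d$, using the lifting argument from the proof of Theorem \ref{thm:unicity-conj}. For the representation to be non-abelian, Proposition \ref{Prop:lift-order} applied to a suitable $\SL_2$-normalization forces $\tr S'=0$, so $d=-a$. Since $S'^2$ is then automatically scalar, the only remaining relation comes from $T_qS'$.

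Next we normalize. The determinant of $S'$ is $-a^2-bc$ and the determinant of $T_q$ is $q$, so the normalized trace condition $\tr\widetilde{T_qS'}=1$ from Theorem \ref{thm:unicity-conj} reads
\[
(\tr T_qS')^2=\det(T_qS')=q\det S'.
\]
We have $T_qS'=\begin{pmatrix} qa+c & qb-a\\ c & -a\end{pmatrix}$, with trace $(q-1)a+c$. The condition therefore becomes a single equation
\[
\bigl((q-1)a+c\bigr)^2=-q(a^2+bc).
\]
This equation is homogeneous of degree $2$ in $(a,b,c)$, so projectively it describes a conic. We expect a one-parameter family of solutions, matching the statement.

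To parametrize, we use the projective freedom to fix $c=q^2-q+1$; this value is natural because it is the one coming from $t=0$. We then set $a=(t-1)(q-1)$ and solve the equation linearly for $b$. Concretely,
\[
(q-1)a+c=(q-1)^2(t-1)+q^2-q+1=(q-1)^2t+q .
\]
Solving $qbc=-q a^2-\bigl((q-1)^2t+q\bigr)^2$ for $b$ should give, after expansion and division by $c$, the entry $-\tfrac1q\bigl(q+(q-1)^2t^2\bigr)$.

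The main obstacle is this factorization: we must verify that the numerator is divisible by $q(q^2-q+1)$. We will check it by expanding in powers of $t$. As sanity checks, at $q=1$ we get $a=0$, $c=1$, $b=-1$, which is $S$. At $t=1$ we get $a=0$ and $b=-1$, and we must confirm that this is projectively $S_q$. That does not follow from the choice $c=q^2-q+1$ alone, since $S_q$ has $c=q$. We therefore check it through the conic equation: with $a=0$ and $b=-1$ it gives $c^2=qc$, so $c=q$. This indicates that the normalization of $c$ must be handled with care. The parametrization should be chosen so that it covers every point of the conic except possibly a boundary point, which would be excluded by the requirement $\varrho_1(S)=S$.

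Finally, we argue that the family is complete. The conic minus finitely many points is rationally parametrized by a single parameter $t$, so every solution specializing to $S$ at $q=1$ arises in this way, up to the projective scaling.
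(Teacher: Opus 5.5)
Your route is the paper's: $d=-a$ from $S'^2=1$, a single quadratic relation from $(T_qS')^3=1$, solved linearly for $b$, then reparametrized by $t$. The differences are minor. You normalize $c=q^2-q+1$ where the paper normalizes $a=q-1$, so $t=1$ lies inside your family while the paper treats $a=0$ separately. Writing the order-$3$ condition as $(\tr T_qS')^2=\det(T_qS')$ avoids the spurious root $b=\tfrac{q-1}{q}$ that the paper gets by expanding the cube. Completeness is as easy as you say: for $c\neq 0$ the relation is linear in $b$ with coefficient $qc\neq 0$, and $c=0$ forces $(q^2-q+1)a^2=0$, i.e.\ a singular matrix.

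However, two of your intermediate claims are wrong, and you leave undone the one computation that carries the proof.

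\emph{Divisibility.} Nothing has to be divisible by $q$. Your numerator $-qa^2-\bigl((q-1)^2t+q\bigr)^2$ equals $-t^2$ at $q=0$, and the factor $\tfrac1q$ in $b$ simply survives. What you need is only divisibility by $c=q^2-q+1$, namely
\[
q(q-1)^2(t-1)^2+\bigl((q-1)^2t+q\bigr)^2=(q^2-q+1)\bigl(q+(q-1)^2t^2\bigr).
\]
This holds because the terms $\pm 2q(q-1)^2t$ cancel and $q+(q-1)^2=q^2-q+1$.

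\emph{The case $t=1$.} The formula gives $b=-\tfrac{q^2-q+1}{q}$, not $b=-1$. Hence $S_{q,1}=\tfrac{q^2-q+1}{q}\,S_q$, which is projectively $S_q$, and there is no normalization problem with $c$. Your conic argument (the only point with $a=0$, $c\neq 0$ is $(0:-1:q)$) already shows this once it is read projectively.
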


\begin{proof}
This is an explicit computation. We search for $\varrho(S)= \left(\begin {smallmatrix}a&b\\ c&d \end{smallmatrix}\right)$ satisfying $\varrho(S)^2=(T_q \varrho(S))^3=1$. The first equation is equivalent to $d=-a$. If $a=0$, then $S_q$ is the only solution. If not, we can rescale $\varrho(S)$ such that $a=q-1$, since we have $a=0$ for $q=1$.

The second equation then gives a quadratic expression for $b$ as function of $c$ and $q$, which factorizes. One root is $b=\frac{q-1}{q}$, which does not give the correct value for $q=1$. The second root is
$$b=-\frac{1}{cq}((q-1)^2(q^2-q+1)+2c(q-1)^2+c^2).$$
Finally, we put $t=\frac{q^2-q+1}{c}+1$ to get the form \eqref{Eq-Sqt}.
\end{proof}

By Theorem \ref{thm:unicity-conj}, the representation $\varrho_{q,t}$ is conjugated to $\varrho_q$. The conjugacy is explicitly given in the following

\begin{proposition}\label{Prop:rho-q-t-conj}
We have $\varrho_{q,t} = U_f\varrho_{q,1}U_f^{-1},$
where $$U_f=\left(\begin{smallmatrix}f & \tfrac{1-f}{1-q}\\ 0 & 1\end{smallmatrix}\right), \;\text{ with }\;f=\frac{q+(q-1)^2t}{q^2-q+1} .$$
\end{proposition}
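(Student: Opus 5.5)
The identity is an equality of representations of $\PSL_2(\Z)=\langle S,T\rangle$, so it is enough to check it on the two generators $T$ and $S$. Since both sides are Möbius transformations in $\PGL_2$, each check is an equality up to a nonzero scalar. The plan has three steps: verify the $T$-generator, verify the $S$-generator, and note where the formula for $f$ comes from.

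\emph{Step 1: the generator $T$.} Here we must show $U_fT_qU_f^{-1}=T_q$. The matrix $U_f$ is upper triangular, acting as $z\mapsto fz+\frac{1-f}{1-q}$. The map $T_q$ acts as $z\mapsto qz+1$, has fixed point $\frac{1}{1-q}$, and is the affine map $z\mapsto q\bigl(z-\frac{1}{1-q}\bigr)+\frac{1}{1-q}$. Now $U_f$ also fixes $\frac{1}{1-q}$, since
\[
f\cdot\frac{1}{1-q}+\frac{1-f}{1-q}=\frac{1}{1-q}.
\]
So $U_f$ is a dilation centred at $\frac{1}{1-q}$, and it commutes with $T_q$, which is a dilation about the same point. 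This holds for every value of $f$.

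\emph{Step 2: the generator $S$.} Here we must show $U_fS_qU_f^{-1}=S_{q,t}$ projectively. Equivalently, and avoiding inverses, we check
\[
U_fS_q=\lambda\, S_{q,t}U_f
\]
for some scalar $\lambda$. Comparing lower-left entries fixes $\lambda$: the lower-left entry of $U_fS_q$ is $q$, while that of $S_{q,t}U_f$ is $(q^2-q+1)f$, so
\[
\lambda=\frac{q}{(q^2-q+1)f}.
\]
With this $\lambda$, the remaining three entries give polynomial identities in $q$ and $t$, which we verify by clearing the denominators $(1-q)$, $q$ and $q^2-q+1$.

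\emph{Origin of $f$ (optional).} One can also derive $f$ rather than just verify it. The centralizer of $T_q$ in $\PGL_2$ consists of the dilations about $\frac{1}{1-q}$, together with the maps fixing $\infty$. Among these dilations, one matches the lower-right entry $(t-1)(1-q)$ of $S_{q,t}$ after normalizing the lower-left entry, and solving that condition yields $f=\frac{q+(q-1)^2t}{q^2-q+1}$.

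The main obstacle is the entry check in Step 2, especially the upper-right entry. It involves $\frac{1-f}{1-q}$, and substituting the formula for $f$ gives
\[
1-f=\frac{(q-1)^2(1-t)}{q^2-q+1},
\qquad\text{so}\qquad
\frac{1-f}{1-q}=\frac{(q-1)(t-1)}{q^2-q+1}.
\]
With this simplification, the upper-right entry should reduce to $-\frac{1}{q}\bigl(q+(q-1)^2t^2\bigr)$ after a short expansion. I would carry out this simplification first, before expanding, to keep the computation manageable.
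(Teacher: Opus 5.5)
Your proposal is correct and follows the paper's proof: $U_f$ commutes with $T_q$ because both are dilations about $\frac{1}{1-q}$, and $U_fS_qU_f^{-1}=S_{q,t}$ is a direct check on the generator $S$. The only nontrivial entry, the upper-right one, does reduce as you expect, via the identity $(q^2-q+1)\bigl(q+(q-1)^2t^2\bigr)=q(q-1)^2(t-1)^2+\bigl(q+(q-1)^2t\bigr)^2$.

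One small slip in your optional remark: for generic $q$, the centralizer of $T_q$ is exactly the set of maps fixing both $\infty$ and $\frac{1}{1-q}$, i.e. the dilations, not all maps fixing $\infty$; this plays no role in the proof.
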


Note that $U_f$ is a homothety of factor $f$ around $\tfrac{1}{1-q}$. In particular $U_f$ preserves the two fixed points of $T_q$, $[\infty]_q^\sharp$ and $[\infty]_q^\flat$. 

\begin{proof}
We have $U_fT_qU_f^{-1}=T_q$, which geometrically comes from the fact that $U_f$ and $T_q$ share the same fixed points. An explicit computation also shows that
$U_fS_qU_f^{-1}~=~S_{q,t}.$
\end{proof}

The representation $\varrho_{q,t}$  allows to define a 2-parameter family of deformations of rationals:
\begin{definition}
Let $\tfrac{r}{s}\in\mathbb{QP}^1$. There is $M\in\PSL_2(\Z)$ such that $\tfrac{r}{s}=M(\infty)$. Then define the right and left $(q,t)$-deformation of $\tfrac{r}{s}$ by
$$\left[\frac{r}{s}\right]_{q,t}^\sharp = \varrho_{q,t}(M)(\infty)\;\;\text{ and }\;\; \left[\frac{r}{s}\right]_{q,t}^\flat = \varrho_{q,t}(M)\left(\frac{1}{1-q}\right).$$
For $t=1$, we recover Definition \ref{def:q-numbers}, with $[x]_{q,1}^\square=[x]_q^\square$, where $\square\in\{\flat, \sharp\}$.
\end{definition}

\begin{example} Here are some examples, giving a flavor of $(q,t)$-rationals:

\begin{itemize}
\setlength\itemsep{0.5em}
    \item Right integers: $[n]_{q,t}^\# = \frac{q^n(q-1)t+1+q^2+q^3+...+q^n}{q^2-q+1}=\frac{q^n(q-1)(t-1)}{q^2-q+1}+[n]_q$,
    \item Left integers: $[n]_{q,t}^{\flat} = q^{n-1}(q-1)t+[n]_q$,
    \item Right inverses of integers: $[\tfrac{1}{n}]^{\sharp}_{q,t} = \tfrac{1+t(q-1)[n]^{\flat}_q}{(q^2-q+1)[n]_q}$.
    \item $\left[\frac{5}{3}\right]_{q,t}^\sharp = \frac{[5]_q + qt(q-1)(1+q^2+q^3)}{(1-q+q^2)(1+q+q^2)}$ and $\left[\frac{5}{3}\right]_{q,t}^\flat = \frac{1+2q+q^2+q^3 + qt(q-1)(1+q+q^2)}{1+q+q^3}$.
\end{itemize}
\end{example}

As a consequence of Proposition \ref{Prop:rho-q-t-conj}, we get:
\begin{proposition}\label{Prop:q-t-from-q-1}
For all $x\in\mathbb{QP}^1$ and $\square\in\{\flat, \sharp\}$ we have
$$[x]_{q,t}^\square = U_f[x]_{q}^\square,$$
where $U_f$ is given as in Proposition \ref{Prop:rho-q-t-conj}
\end{proposition}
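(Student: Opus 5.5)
The plan is to derive the statement directly from Proposition \ref{Prop:rho-q-t-conj}, using the fact that $U_f$ fixes both fixed points of $T_q$. Let $x\in\mathbb{QP}^1$ and choose $M\in\PSL_2(\Z)$ with $M(\infty)=x$. By definition,
$$[x]_{q,t}^\sharp=\varrho_{q,t}(M)(\infty),\qquad [x]_{q,t}^\flat=\varrho_{q,t}(M)\left(\tfrac{1}{1-q}\right).$$
Proposition \ref{Prop:rho-q-t-conj} gives $\varrho_{q,t}=U_f\varrho_{q,1}U_f^{-1}$ on the generators $S$ and $T$. Since both sides are homomorphisms, the identity holds for every element, so
$$\varrho_{q,t}(M)=U_f\,\varrho_{q,1}(M)\,U_f^{-1}\quad\text{for all } M\in\PSL_2(\Z).$$
These are equalities in $\PGL_2$, so they hold as Möbius transformations.

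Next I check that $U_f^{-1}$ fixes $\infty$ and $\frac{1}{1-q}$. Since $U_f$ is upper triangular, it fixes $\infty$. At the other point,
$$U_f\left(\tfrac{1}{1-q}\right)=\frac{f}{1-q}+\frac{1-f}{1-q}=\frac{1}{1-q},$$
so $U_f$ also fixes $\frac{1}{1-q}$. The same then holds for $U_f^{-1}$.

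Combining the two steps, for $p\in\{\infty,\tfrac{1}{1-q}\}$ we get
$$\varrho_{q,t}(M)(p)=U_f\,\varrho_{q,1}(M)(p).$$
Since $\varrho_{q,1}(S)=S_q$ and $\varrho_{q,1}(T)=T_q$, the right-hand side is $U_f[x]_q^\square$, with $\square=\sharp$ for $p=\infty$ and $\square=\flat$ for $p=\frac{1}{1-q}$.

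The only point needing care, and the expected obstacle, is the degenerate parameters. The matrix $U_f$ must be invertible, which requires $f\neq 0$ and $q\neq 1$. I would treat $q$ and $t$ as formal parameters, or as generic values, and regard the identity as one of rational functions, which then specializes wherever both sides are defined.
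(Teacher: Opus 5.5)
Your proof is correct and follows essentially the same route as the paper: you conjugate $\varrho_{q,1}(M)$ by $U_f$ using Proposition \ref{Prop:rho-q-t-conj}, then use that $U_f^{-1}$ fixes both $\infty$ and $\tfrac{1}{1-q}$. The only cosmetic difference is that the paper writes $M$ explicitly as the continued-fraction word $T^{a_0}ST^{a_1}S\cdots T^{a_n}S$, whereas you take an arbitrary $M$ and use the homomorphism property.
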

\begin{proof}
Let $x=T^{a_0}ST^{a_1}S \cdots T^{a_n}S(\infty)$ be the continued fraction representation of $x$, where $a_0\in\Z$ and $a_i\in \N_{>0}$ for all $i>0$.
Then
\begin{align*}
    [x]_{q,t}^\sharp &= T_{q,t}^{a_0}S_{q,t}T_{q,t}^{a_1}S_{q,t} \cdots T_{q,t}^{a_n}S_{q,t}(\infty) \\
    &= U_f T_q^{a_0}S_qT_q^{a_1}S_q\cdots T_q^{a_n}S_q U_f^{-1}(\infty)\\
    &= U_f [x]_q^\sharp,
\end{align*}
where we used that $U_f(\infty)=\infty$. The same computation for $[x]_{q,t}^\flat$ works, where we replace $\infty$ by $\tfrac{1}{1-q}$.
\end{proof}

\begin{Remark}
The previous proposition allows to define $[x]_{q,t}$ for irrational $x$, using $[x]_{q,1}$ defined in \cite{MGO-2022}.
\end{Remark}

\begin{Remark}\label{Rk:conj-neg-matrix}
The extended $\PGL_2(\Z)$-symmetry \cite{Jouteur} and the transition map $q~\mapsto~q^{-1}$ \cite{Jouteur, Thomas} also pass to the $(q,t)$-rationals, via conjugacy by $U_f$. In particular, the negation $N = \left(\begin{smallmatrix}
-1 & 0 \\
0 & 1\\
\end{smallmatrix}\right)$ is deformed as 
$$N_{q,t}=\begin{pmatrix} -1 & \frac{(q-1)(q(2t-1)+(q-1)^2t^2)}{q(q^2-q+1)}\\ q-1 & 1 \end{pmatrix}.$$
\end{Remark}

The definition of the $q$-numbers by Morier-Genoud and Ovsienko \cite{MGO-2020} was motivated by the fact that the orbit $T_q^n(0)=T_q^n S_q (\infty) = [n]_q$ realizes $q$-deformations of integers, important in combinatorics. 
The left version of Bapat-Becker-Licata \cite{BBL} lacks this property. Still, there is another $q$-version of rationals coinciding with $q$-integers:

\begin{coro}\label{Coro:unicity-q-deformation}
There are exactly two deformations $\mathbb{QP}^1\to \mathbb{CP}^1$, which are equivariant with respect to the modular group $\PSL_2(\Z)$ and coinciding on $\Z$ with the usual $q$-integers $[n]_q=\frac{1-q^n}{1-q}$, with $q\in\C$.
\end{coro}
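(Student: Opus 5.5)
Here is the plan. A deformation equivariant with respect to $\PSL_2(\Z)$ will be understood as a map $\phi:\mathbb{QP}^1\to\mathbb{CP}^1$ together with a representation $\varrho:\PSL_2(\Z)\to\PSL_2(\C)$ satisfying $\phi(M\cdot x)=\varrho(M)\cdot\phi(M)$-equivariance, i.e. $\phi(M\cdot x)=\varrho(M)\cdot\phi(x)$, and specializing to the identity at $q=1$. Since $\PSL_2(\Z)$ acts transitively on $\mathbb{QP}^1$, such a $\phi$ is determined by $\varrho$ and the single value $p=\phi(\infty)$. The stabilizer of $\infty$ is generated by $T$, so $p$ must be a fixed point of $\varrho(T)$. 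I would therefore reduce the classification to pairs $(\varrho,p)$ with $\varrho(T)p=p$.

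First I would pin down $\varrho(T)$ from the constraint on integers. Since $\phi(n)=\varrho(T)^n\phi(0)=[n]_q$ for all $n$, the Möbius map $\varrho(T)$ sends $[n]_q\mapsto[n+1]_q$ for every $n$. A Möbius transformation is determined by three points, and for generic $q$ the $[n]_q$ are distinct. So $\varrho(T)=T_q$, as remarked at the beginning of Section~\ref{Sec:q-t}. Its fixed points are $\infty$ and $\frac{1}{1-q}$, so $p\in\{\infty,\tfrac1{1-q}\}$.

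Next I would determine $\varrho(S)$. By Theorem~\ref{thm:unicity} (and, for non-abelian $\varrho$, Theorem~\ref{thm:unicity-conj}), the representation is $\varrho_{q,t}$ for some $t$. It remains to impose $\phi(0)=[0]_q=0$, using $0=S\cdot\infty$, i.e. $S_{q,t}\cdot p=0$.
\begin{itemize}
\item In the case $p=\infty$, $S_{q,t}(\infty)=\frac{(t-1)(q-1)}{q^2-q+1}$, which vanishes iff $t=1$. This gives the Morier-Genoud--Ovsienko $[\cdot]^\sharp_q$.
\item In the case $p=\frac1{1-q}$, a direct computation of $S_{q,t}(\frac1{1-q})$ is needed. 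Equivalently, I would use the formula $[n]^\flat_{q,t}=q^{n-1}(q-1)t+[n]_q$ from the examples, which equals $[n]_q$ for all $n$ iff $t=0$. This gives $\varrho_{q,0}$ with $S_{q,0}$ as in \eqref{eq:def_S_q_0} and the map $x\mapsto[x]^\flat_{q,0}$.
\end{itemize}
Both candidates do coincide with $[n]_q$ on $\Z$ by these formulas. The two are genuinely different maps, since for instance their values at $\infty$ differ. This gives exactly two.

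The main obstacle is justifying that every admissible $\varrho(S)$ lies in the family $S_{q,t}$. Theorem~\ref{thm:unicity} assumed rescaling $a=q-1$ using the $q=1$ specialization, and also discarded the root $b=\frac{q-1}{q}$. For $q\in\C$ fixed, I would avoid this by using conjugacy instead. By Theorem~\ref{thm:unicity-conj}, $\varrho=g\varrho_{q'}g^{-1}$. Matching $\varrho(T)=T_q$ forces $q'\in\{q,q^{-1}\}$, and the corollary on $q\leftrightarrow q^{-1}$ conjugacy reduces to $q'=q$. Then $g$ commutes with $T_q$, so for $q\neq1$ $g$ is an affine map fixing $\frac1{1-q}$ and $\infty$, i.e. some $U_f$. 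By Proposition~\ref{Prop:rho-q-t-conj}, $f$ ranges over all nonzero values as $t$ varies, which recovers the family. The abelian case is excluded, since an abelian $\varrho$ would have $\varrho(S)$ commuting with $T_q$ and could not map $p$ to $0$ consistently with $S^2=1$ and $(TS)^3=1$; I would check this briefly. Finally, the degenerate values $q=1$ and roots of unity where the $[n]_q$ collide need a remark; I would state the result for generic $q$.
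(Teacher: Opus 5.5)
Your proposal is correct and follows the paper's route. You fix $\varrho(T)=T_q$, observe that $\phi(\infty)$ must be one of the two fixed points $\infty,\tfrac{1}{1-q}$ of $T_q$, and solve $S_{q,t}\cdot\phi(\infty)=0$, obtaining $t=1$ (right version) and $t=0$ (left version). Your extra justification that every non-abelian $\varrho$ with $\varrho(T)=T_q$ is some $\varrho_{q,t}$ uses Theorem~\ref{thm:unicity-conj} together with the fact that, for generic $q$, the centralizer of $T_q$ consists of the maps $U_f$. This is a useful tightening of the paper's appeal to the formal computation in Theorem~\ref{thm:unicity}, but the overall argument is the same.
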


\begin{proof}
It follows from Theorem \ref{thm:unicity} that $S_{q,t} [\infty]_{q}^{\sharp} = 0$ if and only if $t=1$, and $S_{q,t} [\infty]_{q}^{\flat}~=~0$ if and only if $t=0$. This means that classical $q$-deformations of integers appear as the orbit of $T_q$ in two definitions of rational $(q,t)$-numbers :  as the right orbit of $0$ for $t=1$ (standard case), or the left orbit of $0$ for $t=0$ (new case). 

Moreover, from Proposition \ref{Prop:rho-q-t-conj} it follows that there is a homothety which maps all the new left $q$-integers $[n]_q^\flat$ to the standard $q$-integers $[n]_q^\sharp$.
\end{proof}

%%%%%%%%%%%%%%%%%%%%%%%%%%%%%%%%%%%%%%%%%%%%%%%%%%%%%%%%%%%%%%%%%%%%%%%%%%%%%%
\section{A new family of deformed rationals}\label{Sec:t-0}
%%%%%%%%%%%%%%%%%%%%%%%%%%%%%%%%%%%%%%%%%%%%%%%%%%%%%%%%%%%%%%%%%%%%%%%%%%%%%

Here we study the properties of $(q,t)$-rationals for $t=0$, in particular their positivity and combinatorics.

Corollary \ref{Coro:unicity-q-deformation} 
 shows that the study of the operator $S_{q,0}$ defined by \eqref{eq:def_S_q_0} as 
 
 \begin{equation*}
 S_{q,0}=\begin{pmatrix} 1-q& -1\\ q^2-q+1& q-1\end{pmatrix}.
\end{equation*}
 
 is as much justified from the combinatorics point of view as the operator $S_{q}$ defined by \eqref{eq:q_representation}. Using the extension to matrices of negative determinant, the $(q,0)$-rationals are characterized by the following relations 
 \[
 [x+1]_{q,0}^\square = q[x]_{q,0}^\square+1 \text{ and } \left[\frac{1}{x}\right]_{q,0}^\square = \frac{1}{(q^2-q+1)[x]_{q,0}^\triangle},
 \]
for $\square, \triangle \in \{\sharp,\flat\}$, $\square \neq \triangle$.

Many questions arise on the combinatorial sense of the coefficients of rational fractions arising in this new deformation of rationals, analogous to those treated already for the case $t=1$. 
For $\tfrac{a}{b}\in\mathbb{QP}^1$, we write
$$
\left[\frac{a}{b}\right]^{\square}_{q,t}=\frac{A_{t}^{\square}(q)}{B_{t}^{\square}(q)},\;\; \square \in \{\sharp, \flat\},
$$
with $A_{t}^{\square}$ and $B_{t}^{\square}$ being coprime polynomials in $q$. For $t=0$ and $t=1$, the only cases considered here, we normalize $B_{t}^{\square}$ to have a positive leading coefficient.
These polynomials depend on $\tfrac{a}{b}$ but we omit this dependence to lighten the notations. 
%In the following, the subscript $\{q,1\}$ corresponding to a standard deformation is omitted. 

\subsection{Positivity}

At $t=0$, deformed rationals still enjoy positivity properties. Let us start with a list of examples:

\begin{itemize}
 \setlength\itemsep{0.5em}
    \item Right integers: $[n]_{q,0}^\# = \frac{1+q^2+q^3+...+q^n}{q^2-q+1}$,
    \item Left integers: $[n]_{q,0}^b = [n]_q$,
    \item Right inverses of integers: $\left[\frac{1}{n}\right]^{\sharp}_{q,0} = \tfrac{1}{(q^2-q+1)[n]_q}$.
    \item $\left[\frac{5}{3}\right]_{q,0}^\sharp = \frac{[5]_q}{(q^2-q+1)(1+q+q^2)}$ and $\left[\frac{5}{3}\right]_{q,0}^\flat = \frac{1+2q+q^2+q^3}{1+q+q^3}$.
\end{itemize}

\medskip

Note that not all denominators of these deformed rationals are positive. However, the positivity property still holds in the following sense: 

\begin{theorem}\label{Thm:positivity-t0}
Let $\tfrac{a}{b}\in\Q_{>0}$. We have the following positivity properties
\begin{itemize}
    \item[(i)] $A_{0}^\flat, B_{0}^\flat\in\N[q]$,
    \item[(ii)]$A_{0}^\sharp\in\N[q]$ and $B_{0}^\sharp \in (q^2-q+1)\N[q]$.
\end{itemize}
\end{theorem}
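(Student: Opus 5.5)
The plan is to reduce everything to the known positivity of the standard ($t=1$) $q$-rationals, using Proposition \ref{Prop:q-t-from-q-1}. For $t=0$ we have $f=\tfrac{q}{q^2-q+1}$ and $1-f=\tfrac{(q-1)^2}{q^2-q+1}$. Hence
$$U_f(z)=\frac{qz+1-q}{q^2-q+1}.$$
So if $[x]_q^\square=\tfrac{R}{S}$, then
$$[x]_{q,0}^\square=\frac{qR+(1-q)S}{(q^2-q+1)S}.$$
The two relations stated just before the theorem, $[x+1]_{q,0}^\square=q[x]_{q,0}^\square+1$ and $[1/x]_{q,0}^\square=\bigl((q^2-q+1)[x]_{q,0}^\triangle\bigr)^{-1}$, will let me pass between the cases $x\ge 1$ and $0<x<1$ and between $\sharp$ and $\flat$.

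\emph{Step 1: right version for $x\geq 1$.} Write $[x]_q^\sharp=R/S$ with $R,S\in\N[q]$, which is the positivity of Morier-Genoud and Ovsienko. Since $[x]_q^\sharp=q[x-1]_q^\sharp+1$ and $x-1\ge 0$, we get $R=qR_1+S$ with $R_1\in\N[q]$. The numerator above then becomes $qR+(1-q)S=S+q^2R_1\in\N[q]$, and the denominator is $(q^2-q+1)S$. This gives (ii) for $x\ge1$, provided no factor cancels; that issue is treated in Step 4.

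\emph{Step 2: left version for $0<x<1$.} I would use the inverse relation
$$[x]_{q,0}^\flat=\frac{1}{(q^2-q+1)[1/x]_{q,0}^\sharp},$$
with $1/x>1$. By Step 1, $[1/x]_{q,0}^\sharp=\tfrac{A}{(q^2-q+1)B}$ with $A,B\in\N[q]$. Therefore $[x]_{q,0}^\flat=B/A$ with both numerator and denominator in $\N[q]$, which is (i) for $0<x<1$.

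\emph{Step 3: left version for $x\ge 1$, and right version for $0<x<1$.} For the left version with $x\ge1$, iterate $[x+1]_{q,0}^\flat=q[x]_{q,0}^\flat+1$ starting from $x\in(0,1]$. The base case $x=1$ is $[1]_{q,0}^\flat=[1]_q=1$. This map sends $A/B$ to $(qA+B)/B$, so it preserves membership in $\N[q]$. For the right version with $0<x<1$, use $[x]_{q,0}^\sharp=\bigl((q^2-q+1)[1/x]_{q,0}^\flat\bigr)^{-1}=\tfrac{B}{(q^2-q+1)A}$, where $A/B$ is the left version of $1/x$ just obtained. This gives (ii) in the remaining range.

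\emph{Step 4: coprimality (expected main obstacle).} The statement concerns the reduced fractions $A_0^\square/B_0^\square$, so I must make sure that in Steps 1–3 no factor of $q^2-q+1$, or any other common factor, cancels in a way that destroys the claimed form. Since $\varrho_{q,0}$ is conjugate to $\varrho_{q,1}$ by $U_f$ and the matrices involved are invertible over $\Z[q,q^{-1}]$ up to the scalar $q^2-q+1$, I would track determinants to control this. Concretely, I would work with the column vector $\varrho_{q,0}(M)\binom{1}{0}$ or $\varrho_{q,0}(M)\binom{1}{1-q}$ for $M=T^{a_0}S\cdots$, where $\det S_{q,0}=-(q^2-q+1)\cdot(\text{unit})$. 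A gcd of the two entries must divide a power of $q(q^2-q+1)$ times the determinant of the product. I would then check that $q^2-q+1$ does not divide the numerators: evaluate at a primitive sixth root of unity $\omega$, where $U_f$ degenerates. For instance, $S+q^2R_1$ at $\omega$ equals $S(\omega)+\omega^2R_1(\omega)$, and I would show this is nonzero using the known values of standard $q$-rationals at sixth roots of unity. This is the most delicate step. If a direct argument fails, the fallback is to prove (i) and (ii) jointly by induction on the length of the continued fraction, keeping explicit unreduced pairs together with a gcd invariant.
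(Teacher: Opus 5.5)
Your proposal is correct, but it takes a different route from the paper.

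\textbf{The two routes.} The paper obtains the statement as a corollary of the stronger Theorem~\ref{Thm:pos-Farey-d}. It shows $d_0^{\square\triangle}=q\,d_1^{\square\triangle}$ for all four deformed Farey determinants and imports positivity of the $(q,1)$-determinants, including the left ones, from Proposition~\ref{Prop:positivity-q-dF}. It then pairs $\tfrac ab$ with $0$ and with $\tfrac10$, using $[0]^\flat_{q,0}=0$ and $[\tfrac10]^\sharp_{q,0}=\tfrac10$.

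You work number by number instead. The shift identity $R=qR_1+S$ turns $qR+(1-q)S$ into the manifestly positive $S+q^2R_1$. So you only need the classical positivity of the right $q$-rationals of Morier-Genoud and Ovsienko. The inversion relation $[1/x]^\square_{q,0}=\bigl((q^2-q+1)[x]^\triangle_{q,0}\bigr)^{-1}$, together with the shift, then carries this to the other three cases.

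\textbf{What each buys.} Your route has lighter inputs: no positivity of left $q$-rationals or of Farey determinants is needed. It also gives the explicit formula $A_0^\sharp(x)=B_1^\sharp(x-1)+q^2A_1^\sharp(x-1)$ for $x\ge1$. The paper's route avoids your case split and the negative-determinant extension, and it proves positivity of the Farey determinants at the same time.

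\textbf{Step 4.} One claim there is wrong, though harmless. In fact $\det S_{q,0}=-(1-q)^2+(q^2-q+1)=q$, so every $\varrho_{q,0}(M)$ has determinant a power of $q$. The factor $q^2-q+1$ enters only through $U_f$, whose cleared form $\left(\begin{smallmatrix} q & 1-q\\ 0 & q^2-q+1\end{smallmatrix}\right)$ has determinant $q(q^2-q+1)$.

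The coprimality check is then short:
\begin{itemize}
\item A common factor of $S+q^2R_1$ and $S$ divides $q^2R_1$, hence is a power of $q$, which does not affect positivity.
\item For a root $\sigma$ of $q^2-q+1$ we have $\sigma^2=\sigma-1$. So $S(\sigma)+\sigma^2R_1(\sigma)$ is exactly the quantity that Lemma~\ref{Lem:special-values}, applied to $x-1$, places in $\mathbb{U}_6$, and it is nonzero. This is the same lemma the paper uses for its gcd analysis.
\end{itemize}

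Reducedness then transfers to Steps 2 and 3. Inversion swaps numerator and denominator, and the shift can only create factors of $q$. In the right version with $0<x<1$, the new numerator is, up to a power of $q$, again a Step~1 numerator, hence nonzero at $\sigma$. Your fallback induction is therefore not needed.
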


This theorem is a special case of a statement about deformed Farey determinants.

\begin{definition}\label{Def:q-Farey-det}
Let $t\in\{0,1\}$. For $(\tfrac{a}{b},\tfrac{c}{d})$ any pair of rationals with $(q,t)$-deformation $\left(\tfrac{A^\square_t}{B_t^\square},\tfrac{C_t^\triangle}{D_t^\triangle}\right)$ with $\square, \triangle\in\{\sharp,\flat\}$, we define the \emph{deformed Farey determinant} by
$$d_t^{\square\triangle}\left(\frac{a}{b},\frac{c}{d}\right)= A^\square_t D_t^\triangle-B_t^\square C_t^\triangle,$$
where $(\square,\triangle)\neq (\flat ,\flat)$ for $t=1$ and $(\square,\triangle)\neq (\sharp ,\sharp)$ for $t=0$.
Else, we define
$$d_t^{\square\square}\left(\frac{a}{b},\frac{c}{d}\right)=\frac{A_t^\square D_t^\square-B_t^\square C_t^\square}{q^2-q+1},$$
where $\square=\flat$ for $t=1$ and $\square=\sharp$ for $t=0$.
\end{definition}

\begin{theorem}\label{Thm:pos-Farey-d}
If $\tfrac{a}{b}>\tfrac{c}{d}$, then the four $(q,0)$-deformed Farey determinants are polynomial with positive coefficients:
\[
d_0^{\square\triangle}\left(\frac{a}{b},\frac{c}{d}\right) \in \N[q],
\]
with $\square,\triangle\in \{\sharp,\flat\}$.
\end{theorem}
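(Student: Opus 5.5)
Throughout I write $\Phi := q^2-q+1$. The plan is to reduce the four $(q,0)$-Farey determinants to the known positivity of $q$-deformed Farey determinants for the standard deformation ($t=1$) via the conjugacy of Proposition \ref{Prop:rho-q-t-conj}, and, where that is not enough, to run the classical induction along the Farey (Stern--Brocot) tree.

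\textbf{Step 1 (formulas).} By Proposition \ref{Prop:q-t-from-q-1}, $[x]_{q,0}^\square=U_f[x]_q^\square$ with $f=\frac{q}{\Phi}$ and $\frac{1-f}{1-q}=\frac{1-q}{\Phi}$. So if $[x]_q^\square=\frac{A}{B}$, then
\[
[x]_{q,0}^\square=\frac{qA+(1-q)B}{\Phi B}.
\]
For $\square=\sharp$, and after checking coprimality, this gives $A_0^\sharp=qA^\sharp+(1-q)B^\sharp$ and $B_0^\sharp=\Phi B^\sharp$. For $\square=\flat$ the fraction must simplify by $\Phi$, since $[n]_{q,0}^\flat=[n]_q$. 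I would show that $\Phi$ divides both $qA^\flat+(1-q)B^\flat$ and $B^\flat$, for instance via $S_{q,0}$ and the relation $[1/x]_{q,0}^\square=1/(\Phi[x]_{q,0}^\triangle)$. This relation exchanges $\sharp$ and $\flat$, which is what makes the normalisation in Definition \ref{Def:q-Farey-det} consistent.

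\textbf{Step 2 (determinants in terms of $t=1$ data).} For a $2\times2$ matrix acting on columns $(A,B)^T$, determinants of pairs of columns scale by $\det$. Write $U=\begin{pmatrix} q & 1-q\\ 0&\Phi\end{pmatrix}$, which has $\det U=q\Phi$. Then
\[
A_0^\square D_0^\triangle-B_0^\square C_0^\triangle=\frac{q\Phi}{\lambda_\square\lambda_\triangle}\bigl(A^\square D^\triangle-B^\square C^\triangle\bigr),
\]
where $\lambda_\flat=\Phi$ and $\lambda_\sharp=1$ account for the cancellation in Step 1. This gives the following cases:
\begin{itemize}
\item $(\sharp,\sharp)$: $d_0^{\sharp\sharp}=q\,(A^\sharp D^\sharp-B^\sharp C^\sharp)$, a shift of the standard right Farey determinant, which lies in $\N[q]$ when $\tfrac ab>\tfrac cd$ by the known results of Morier-Genoud--Ovsienko.
\item Mixed cases: $d_0^{\sharp\flat}$ and $d_0^{\flat\sharp}$ equal $q$ times the standard mixed determinants $A^\sharp D^\flat-B^\sharp C^\flat$ (respectively $A^\flat D^\sharp-B^\flat C^\sharp$).
\item $(\flat,\flat)$: $d_0^{\flat\flat}=q\,(A^\flat D^\flat-B^\flat C^\flat)/\Phi=q\,d_1^{\flat\flat}$.
\end{itemize}
Hence everything reduces to positivity of the four $t=1$ determinants $d_1^{\square\triangle}$, up to the factor $q$ and with the leading-coefficient sign normalisation checked.

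\textbf{Step 3 (positivity for $t=1$).} The $\sharp\sharp$ case is known; the mixed and $\flat\flat$ cases may or may not be in the literature (Bapat--Becker--Licata), so I would prove them directly. The method is induction on the Farey tree. For Farey neighbours, $d$ is a monomial, up to the factor $\Phi$ in the $\flat\flat$ case, which can be checked on $(\frac10,\frac01)$ and propagated by $\varrho_q(T)$ and $\varrho_q(S)$ using $\det T_q=q$ and $\det S_q=q$. For general $\frac ab>\frac cd$, I would write $\frac ab$ as a mediant $\frac{a'+a''}{b'+b''}$. The $q$-deformed mediant is $A=q^{k}A'+A''$, $B=q^kB'+B''$ for a suitable exponent $k$ (the standard MGO weighted mediant), and the determinant then splits as a sum of two determinants of pairs with smaller total size, each still ordered. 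Positivity follows by induction, with $\sharp$ and $\flat$ handled uniformly because the left versions satisfy the same weighted mediant rule, only with different seeds.

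The main obstacle is Step 3: getting the weighted-mediant recursion right for left $q$-rationals, with the correct exponents and the correct orientation so that each split term is a determinant of an ordered pair. I would first try to verify it via the matrix product description $M_q=T_q^{a_0}S_qT_q^{a_1}\cdots$ applied to the seed vectors $(1,0)^T$ and $(1,1-q)^T$. The second difficulty, of lesser importance, is the divisibility by $\Phi$ and the leading-coefficient sign conventions in Step 1.
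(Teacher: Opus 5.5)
Your Steps 1 and 2 are the paper's proof. You write $[x]_{q,0}^\square=U_f[x]_q^\square$ with $f=q/\Phi$ (your $\Phi=q^2-q+1$), then obtain $d_0^{\square\triangle}=q\,d_1^{\square\triangle}$ in all four cases; your $\det U=q\Phi$ bookkeeping replaces the paper's four explicit expansions.

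\textbf{The gap.} It lies in the ingredient you postpone in Step 1, which is exactly what the paper takes from Lemma~\ref{Lem:special-values}.
\begin{itemize}
\item As written, your $\flat$ claim cannot hold. If $\Phi$ divided both $qA^\flat+(1-q)B^\flat$ and $B^\flat$, it would divide $qA^\flat$, hence $A^\flat$, contradicting $\gcd(A^\flat,B^\flat)=1$. Already for $x=n\geq 1$ one has $B^\flat=1$. What is needed is only $\Phi\mid qA^\flat+(1-q)B^\flat$, since the $\Phi$ in the denominator $\Phi B^\flat$ is explicit.
\item In the $\sharp$ case, $\gcd(qA^\sharp+(1-q)B^\sharp,B^\sharp)=\gcd(qA^\sharp,B^\sharp)$ divides $q$. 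So your \emph{checking coprimality} really amounts to $\Phi\nmid qA^\sharp+(1-q)B^\sharp$. This is not a formality. If a factor $\Phi$ cancelled there, $d_0^{\sharp\sharp}$ and the mixed determinants would become $q\,d_1/\Phi$, and dividing an element of $\N[q]$ by $\Phi$ need not preserve positivity.
\item Neither fact follows from the integer case. The route you mention via $S_{q,0}$ and the inversion relation is a pointer, not an argument.
\end{itemize}

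\textbf{How to close it.} Either cite Lemma~\ref{Lem:special-values}, or argue directly as follows.
\begin{itemize}
\item At a root $\sigma$ of $\Phi$, the row vector $(\sigma,1-\sigma)$ is a common left eigenvector of $T_\sigma$ and $S_\sigma$, with eigenvalues $\sigma$ and $1-\sigma=\sigma^{-1}$.
\item $\det M_q$ is a power of $q$, and the seeds $(1,0)^T$ and $(1,1-q)^T$ have coprime entries. Hence the reduced column $(A^\square,B^\square)^T$ equals $\pm q^jM_q$ times the seed.
\item Therefore $\sigma A^\square(\sigma)+(1-\sigma)B^\square(\sigma)$ is a nonzero multiple of $\sigma$ for $\sharp$. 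For $\flat$ it is a multiple of $\sigma+(1-\sigma)^2=\Phi(\sigma)=0$.
\end{itemize}

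\textbf{A minor point.} The $\sharp$ fraction can still lose powers of $q$; for instance $[-1]^\sharp_{q,0}=-q/\Phi$. So $d_0=q\,d_1$ only holds up to a power of $q$, as the paper says. This is harmless, since $d_0$ is a polynomial.

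\textbf{Step 3.} It is unnecessary. Proposition~\ref{Prop:positivity-q-dF}, quoted in the paper, already gives $d_1^{\square\triangle}\in\N[q]$ for all four choices, and the paper simply applies it.

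If you do want to reprove it by Farey induction, your claim that the pieces are \emph{each still ordered} depends on which element you split. For the pair $(\tfrac12,\tfrac13)$, splitting $\tfrac12$ into its parents $\tfrac01,\tfrac11$ produces $d(\tfrac01,\tfrac13)$, which has the wrong orientation. You must split the member with the larger denominator. Outside the Farey-neighbour case, both of its parents then lie strictly on the correct side. The base case reduces to $\det(e_1,S_qv)=\det(v,S_qe_1)=q$ and $\det(v,S_qv)=\Phi$, for $e_1=(1,0)^T$ and $v=(1,1-q)^T$.
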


In the proof of this Theorem, we will use the following two statements already proven in \cite{JPaRoT2026} that use the $(q,1)$-deformed Farey determinants.

\begin{lemma}[Lemma 4.2 in \cite{JPaRoT2026}]\label{Lem:special-values}
Denote by $\mathbb{U}_6$ the set of 6-th roots of unity. For all $\frac{a}{b}\in\mathbb{Q}$, we have the following:
$$B_1^\sharp (\sigma)+(\sigma-1)A_1^\sharp (\sigma) \in \mathbb{U}_6 \;\;\text{ and }\;\; B_1^\flat(\sigma)+(\sigma-1)A_1^\flat(\sigma)=0 \,.$$
\end{lemma}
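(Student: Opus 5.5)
The plan is to realise the two quantities as a linear functional applied to column vectors, and to show that this functional is a common left eigenvector of $T_\sigma$ and $S_\sigma$. Throughout, $\sigma$ denotes a root of $q^2-q+1$, i.e.\ a primitive $6$-th root of unity, so that $\sigma^2=\sigma-1$ and $\sigma^3=-1$.

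\textbf{Step 1: the eigenvector.} Consider the row vector $\ell=(\sigma-1,\;1)$, so that $\ell\cdot(A,B)^{T}=B+(\sigma-1)A$. A direct computation with the matrices \eqref{eq:q_representation} at $q=\sigma$ gives
$$\ell\,T_\sigma=\big((\sigma-1)\sigma,\;\sigma\big)=(-1,\sigma)=\sigma\,\ell,\qquad \ell\,S_\sigma=\big(\sigma,\;1-\sigma\big)=-\sigma^2\,\ell .$$
Both simplifications use only $\sigma^2=\sigma-1$ and $\sigma^3=-1$. So $\ell$ is a common left eigenvector of $T_\sigma$ and $S_\sigma$, and both eigenvalues lie in $\mathbb{U}_6$. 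Writing $M=T^{a_0}S\cdots$ as a word in $T,S$, we get $\ell\,M_\sigma=\varepsilon\,\ell$ for some $\varepsilon\in\mathbb{U}_6$. This holds for the particular integral lift $M_q\in\mathrm{M}_2(\Z[q^{\pm1}])$ obtained by multiplying the matrices $T_q,S_q$, since neither has a denominator.

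\textbf{Step 2: evaluate on the two base points.} Take $M$ with $M(\infty)=\tfrac ab$. For the right version, $M_q(1,0)^T$ is the first column of $M_q$. Applying $\ell$ at $q=\sigma$ gives $\varepsilon(\sigma-1)=\varepsilon\sigma^2\in\mathbb{U}_6$. For the left version, $M_q(1,1-q)^T$ gives $\varepsilon\big((\sigma-1)+(1-\sigma)\big)=0$.

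\textbf{Step 3: normalisation (the main obstacle).} We must pass from these unnormalised vectors to the coprime, sign-normalised pair $(A_1^\square,B_1^\square)$. I would argue as follows. Since $\det M_q=\pm q^k$, let $v$ be either of the vectors above and $g$ the gcd of its entries. Multiplying $v$ by the adjugate of $M_q$ yields $\pm q^k(1,0)^T$, respectively $\pm q^k(1,1-q)^T$. Hence $g$ divides $q^k$, i.e.\ $g=\pm q^j$ up to units of $\Z[q^{\pm1}]$.

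Therefore $(A_1^\square,B_1^\square)=\pm q^{-j}v$. This rescaling multiplies the value of $\ell$ by $\pm\sigma^{-j}\in\mathbb{U}_6$. In the right case the value therefore stays in $\mathbb{U}_6$, and in the left case it stays $0$, which proves the lemma.

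The only point needing care is precisely this: the common factor removed during normalisation must not vanish at $\sigma$. That is what the determinant/adjugate argument guarantees, because any such factor is a monomial $\pm q^j$.
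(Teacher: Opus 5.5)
Your proof is correct and complete. The paper does not prove this lemma; it quotes it from \cite{JPaRoT2026}, so there is no in-text argument to compare with. Your argument is self-contained and has two parts. First, the common left eigenvector $\ell=(\sigma-1,1)$ satisfies $\ell T_\sigma=\sigma\ell$ and $\ell S_\sigma=-\sigma^2\ell$. Second, the adjugate argument shows that the only possible common factor of the entries of $M_q(1,0)^T$ or $M_q(1,1-q)^T$ is $\pm q^j$. The adjugate step also correctly rules out integer content. This matters because the statement tacitly requires $(A_1^\square,B_1^\square)$ to be primitive in $\Z[q]$: any rescaling by an integer would destroy membership in $\mathbb{U}_6$.

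One cosmetic point concerns Step 3. You can write $M$ as a positive word in $T$ and $S$, which is possible since $T^{-1}=STSTS$ in $\PSL_2(\Z)$. Then $M_q$ has entries in $\Z[q]$, the gcd argument takes place directly in $\Z[q]$, and it gives $g=\pm q^j$ cleanly. As written, ``$g=\pm q^j$ up to units of $\Z[q^{\pm1}]$'' is vacuous. Your conclusion $(A_1^\square,B_1^\square)=\pm q^{-j}v$ is nevertheless right.

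Your argument also fits the framework of Section~\ref{Sec:unicity-and-chi}. The trace coordinates are $(\tr\tilde S,\tr\tilde T,\tr\widetilde{TS})=(0,q^{1/2}+q^{-1/2},1)$, so the commutator trace equals $q+q^{-1}+1$. This is $2$ exactly when $q^2-q+1=0$, so the representation is reducible at $q=\sigma$. Your $\ell$ expresses this reducibility: $\ker\ell$ is the common fixed point $\tfrac{1}{1-\sigma}=\sigma=[\infty]^\flat_\sigma$ of $T_\sigma$ and $S_\sigma$. That is why the left expression vanishes identically, while the right one is governed by the character $M\mapsto\varepsilon\in\mathbb{U}_6$.
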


\begin{proposition}[Proposition 4.6 in \cite{JPaRoT2026}]\label{Prop:positivity-q-dF}
For $\tfrac{a}{b}> \tfrac{c}{d}$, we have $d_1^{\square\triangle}(\tfrac{a}{b},\tfrac{c}{d})\in\mathbb{N}[q]$ for $\square,\triangle~\in~\{\sharp ,\flat\}$.
\end{proposition}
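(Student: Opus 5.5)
The plan is an induction on the Farey (Stern--Brocot) structure of $\mathbb{QP}^1$. Throughout, $d_1^{\square\triangle}$ is treated as a bilinear determinant of the column vectors $(A_1^\square,B_1^\square)^T$ and $(C_1^\triangle,D_1^\triangle)^T$. The key input is the \emph{weighted mediant} recursion for $t=1$.

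\textbf{Weighted mediant recursion.} Let $\tfrac{a}{b}=\tfrac{a'+a''}{b'+b''}$ be the mediant of Farey neighbours $\tfrac{a'}{b'}<\tfrac{a''}{b''}$. Write $\tfrac{a}{b}=M(\infty)$ with $M\in\PSL_2(\Z)$ whose columns are the two parents. Then $M$ factors through $M'$ times a short word in $T$ and $S$. Applying $\varrho_{q,1}$ gives
\[
\begin{pmatrix}A\\ B\end{pmatrix}=q^{k}\begin{pmatrix}A'\\ B'\end{pmatrix}+\begin{pmatrix}A''\\ B''\end{pmatrix}
\]
for some $k\in\N$, up to an overall power of $q$ (and a factor $q^2-q+1$ in the $\flat$ case) that is harmless for positivity. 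The same weights hold for both $\sharp$ and $\flat$. The reason is that $[x]^\sharp_q$ and $[x]^\flat_q$ are images of the fixed vectors $(1,0)^T$ and $(1,1-q)^T$ under the same matrix $\varrho_{q,1}(M)$, and $T_q$ fixes both vectors up to scalars. This is the statement I would establish first, directly from \eqref{eq:q_representation}. It is essentially the Morier-Genoud--Ovsienko weighted Farey sum, and I would check it carefully, including the normalization of the leading coefficients.

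\textbf{Base cases.} I would compute the determinants for the pair $(\infty,0)$, or $(1,0)$ after a shift. The vectors are $[\infty]^\sharp=(1,0)$, $[0]^\sharp=(0,1)$, $[\infty]^\flat=(1,1-q)$ and $[0]^\flat=S_q\cdot\tfrac{1}{1-q}=\tfrac{q-1}{q}$, i.e.\ $(q-1,q)$. This gives
\[
d^{\sharp\sharp}=1,\qquad d^{\sharp\flat}=q,\qquad d^{\flat\sharp}=1,\qquad d^{\flat\flat}=\frac{q+(q-1)^2}{q^2-q+1}=1.
\]
All four are in $\N[q]$, and the $(q^2-q+1)$ normalization in Definition \ref{Def:q-Farey-det} is exactly what makes $d^{\flat\flat}$ polynomial. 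Positive rationals live below the Farey pair $(\infty,0)$. Shifting by $T_q$ multiplies every determinant by $\det T_q=q$, so it preserves $\N[q]$. This extends the statement to all of $\mathbb{Q}$, for pairs lying in a common translate of the tree, and pairs involving $\infty$ are handled as base cases.

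\textbf{Induction.} Suppose $\tfrac{a}{b}>\tfrac{c}{d}$. Induct on $b+d$, and decompose whichever of the two has the larger denominator; say it is $\tfrac{a}{b}$, the other case being symmetric. Write $\tfrac{a}{b}$ as the weighted mediant of its parents $\tfrac{a'}{b'}<\tfrac{a}{b}<\tfrac{a''}{b''}$. By bilinearity,
\[
d\left(\frac{a}{b},\frac{c}{d}\right)=q^k\, d\left(\frac{a'}{b'},\frac{c}{d}\right)+d\left(\frac{a''}{b''},\frac{c}{d}\right).
\]
Clearly $\tfrac{a''}{b''}>\tfrac{c}{d}$. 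For the other parent there are two cases. If $\tfrac{a'}{b'}=\tfrac{c}{d}$, that term vanishes. We cannot have $\tfrac{a'}{b'}<\tfrac{c}{d}$: otherwise $\tfrac{c}{d}$ would lie strictly inside the Farey interval $(\tfrac{a'}{b'},\tfrac{a}{b})$, forcing $d>b$ and contradicting the choice. So both remaining terms have smaller $b+d$ and lie in $\N[q]$ by induction. Hence $d(\tfrac{a}{b},\tfrac{c}{d})\in\N[q]$.

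The main obstacle is the precise form of the weighted mediant identity with uniform weights for both $\sharp$ and $\flat$. It must also respect the normalization that makes denominators have positive leading coefficient, and the stray powers of $q$ that must be absorbed. I would verify it by writing $M=M_{\text{parents}}\cdot T^{\pm1}$-type factors and computing the $q$-images explicitly.
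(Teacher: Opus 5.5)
\textbf{Degenerate case in the induction.} The inductive step fails where you write ``if $\tfrac{a'}{b'}=\tfrac{c}{d}$, that term vanishes''. This holds only for $\square=\triangle$.

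For a rational $y=M(\infty)$, the sharp and flat vectors are, up to scalars, $M_q(1,0)^T$ and $M_q(1,1-q)^T$. Hence $d^{\sharp\flat}(y,y)=-d^{\flat\sharp}(y,y)=\pm q^m(1-q)$, which is nonzero and lies in $\N[q]$ for neither ordering. Two examples:
\begin{itemize}
\item $[1]^\sharp=(1,1)=(1,0)+(0,1)$ gives $d^{\sharp\flat}(1,0)=d^{\sharp\flat}(\infty,0)+d^{\sharp\flat}(0,0)=q+(1-q)=1$. Your rule drops $1-q$ and returns the wrong value $q$. Keeping it leaves a summand with a negative coefficient.
\item $[2]^\flat=q[1]^\flat+[\infty]^\flat$ gives $d^{\flat\sharp}(2,1)=q(q-1)+q$.
\end{itemize}
The same problem arises when you split $\tfrac{c}{d}$ and meet $\tfrac{c''}{d''}=\tfrac{a}{b}$. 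So your scheme can work for $d^{\sharp\sharp}$ and $d^{\flat\flat}$, but it does not close for the two mixed determinants.

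The missing ingredient is to take \emph{all Farey-neighbour pairs} as base cases and compute them directly. Let $x'<x<x''$ be the parents of $x$, with $M(\infty)=x''$ and $M(0)=x'$. The sharp vectors of $x'',x',x$ are $M_q$ applied to $(1,0),(0,1),(1,1)$. The flat ones are $M_q$ applied to $(1,1-q),(q-1,q),(q,1)$. Since $\det M_q$ is a monomial, $d(x,x')$ and $d(x'',x)$ are, up to positive monomial factors, $1,1,q,q^2-q+1$ for $\sharp\sharp,\sharp\flat,\flat\sharp,\flat\flat$. Non-neighbour pairs can then always be split without hitting the diagonal.

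\textbf{The flat recursion.} Your justification here is also incorrect: the weights are \emph{not} the same for $\sharp$ and $\flat$. For example, $[\tfrac12]^\sharp=(q,1+q)=q[1]^\sharp+[0]^\sharp$, whereas $[\tfrac12]^\flat=(q^2,1+q^2)=[1]^\flat+q[0]^\flat$.

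A positive flat recursion does hold, but it comes from the identity $(q,1)^T=(1,1-q)^T+(q-1,q)^T$ pushed through $M_q$, together with two checks:
\begin{itemize}
\item Images of primitive vectors under $M_q$ have only $q$-power content, because $\det M_q$ is a monomial.
\item Each image is a \emph{positive} multiple of the normalized flat vector. You can see this by comparing values at $q=1$, using that $M_q(q,1)^T$ has nonnegative coefficients.
\end{itemize}
The second check is exactly where $[\infty]^\flat$ must be represented by $(1,1-q)$. With the paper's normalization it is $(-1,q-1)$, and then $d^{\flat\sharp}(\infty,0)=d^{\flat\flat}(\infty,0)=-1$. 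So you should state that $\infty$ is an auxiliary vertex outside the statement.

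Finally, $b+d$ does not decrease when the larger parent is $\infty$ (the parents of $2$ are $1$ and $\infty$). Induct on $a+b+c+d$ instead.
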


\smallskip

\begin{proof}[Proof of Theorem \ref{Thm:pos-Farey-d}]
We know that $[x]_{q,t}^\square = U_f[x]_{q,1}^\square$ for $\square\in\{\sharp, \flat\}$. For $t=0$ we get 
\begin{equation}\label{Eq:rel-t-0}
[x]_{q,0}^\square = f \frac{A_1^\square}{B_1^\square}+\frac{1-f}{1-q} = \frac{qA_1^\square+ (1-q)B_1^\square}{(q^2-q+1)B_1^\square},
\end{equation}
where we used that $f=\tfrac{q}{q^2-q+1}$. This fraction may not be reduced.\\ Indeed, $\sigma A^\square_1(\sigma) + (1-\sigma)B_1^\square(\sigma) = \sigma^{-1}((\sigma-1)A_1^\square(\sigma) + B^\square_1(\sigma))$. By Lemma \ref{Lem:special-values}, for $\square = \sharp$ the greatest common divisor is either $1$ or $q$. But for $\square = \flat$, the greatest common divisor is either $q^2-q+1$ or $q(q^2-q+1)$. Therefore up to a factor of $q$,
\[
A_0^\sharp = qA^\sharp_1 + (1-q)B^\sharp_1 \, \text{ , } \, B_0^\sharp = (q^2-q+1)B_1^\sharp,
\]
\[
A_0^\flat = \frac{qA^\flat_1 + (1-q)B^\flat_1}{q^2-q+1} \, \text{ , } \, B_0^\flat = B_1^\flat. 
\]

\noindent The $(q,0)$-deformed Farey determinants can then be expressed in terms of the $(q,1)$-deformed Farey determinants.
\begin{align*}
d_0^{\flat\sharp}\left(\frac{a}{b},\frac{c}{d}\right) &= A_{0}^\flat D_{0}^\sharp- B_{0}^\flat C_{0}^\sharp\\
&= \frac{qA_1^\flat+(1\!-\!q)B_1^\flat}{q^2-q+1}(q^2\!-\!q\!+\!1)D_1^\sharp-B_1^\flat(qC_1^\sharp+(1\!-\!q)D_1^\sharp) \\
&= q(A_1^\flat D_1^\sharp-C_1^\sharp B_1^\flat) \\
&= q\, d_1^{\flat\sharp}\left(\frac{a}{b},\frac{c}{d}\right).
\end{align*}
We then conclude by Proposition \ref{Prop:positivity-q-dF}. Similarly, $d_0^{\sharp\flat} = qd_1^{\sharp\flat}$ and $d_0^{\flat\flat} = qd_1^{\flat\flat}$. Finally,
\begin{align*}
d_0^{\sharp\sharp}\left(\frac{a}{b},\frac{c}{d}\right) &= \frac{A_{0}^\sharp D_{0}^\sharp- B_{0}^\sharp C_{0}^\sharp}{q^2-q+1} \\
&= \frac{(qA_1^\sharp+(1\!-\!q)B_1^\sharp)(q^2\!-\!q\!+\!1)D_1^\sharp-(q^2-q+1)B_1^\sharp(qC_1^\sharp+(1\!-\!q)D_1^\sharp)}{q^2-q+1} \\
&= q(A_1^\flat D_1^\sharp-C_1^\sharp B_1^\flat) \\
&= q\, d_1^{\sharp\sharp}\left(\frac{a}{b},\frac{c}{d}\right) \in \N[q].
\end{align*}
\end{proof}

\begin{proof}[Proof of Theorem \ref{Thm:positivity-t0}]
Applying Theorem \ref{Thm:pos-Farey-d} with $\frac{a}{b}$ and $0$, using that $[0]_{q,0}^\flat = 0$, we get the positivity property for $A_0^\sharp$ and $A_0^\flat$ immediately. Then applying the same theorem with $\frac{a}{b}$ and $\frac{1}{0}$, using that $\left[\frac{1}{0}\right]_{q,0}^\sharp = \frac{1}{0}$, we get the denominators.
\end{proof}

\subsection{Combinatorics of $(q,0)$-rational numbers}

\begin{definition}
Let $a = (a_1,a_2,\cdots,a_n)$ be a finite sequence of positive integers. Denote by $N = a_1+a_2+\cdots +a_n$. The fence poset $F(a)$ associated with this sequence is the poset with $N$ elements $v_0$, $v_1$, ..., $v_{N}$ and cover relations 
$$
v_0 < v_1 < \cdots < v_{a_1} > v_{a_1+1} > \cdots > v_{a_1+a_2} < \cdots \dots < v_{N-a_n} > \cdots > v_{N}.
$$
\end{definition}

\begin{proposition}
Let $x \in \Q_{>0}$.  and let $x = [a_1,a_2,\cdots,a_n]$ be its positive continued fraction expansion, with $n$ even. 
\begin{itemize}
    \item[(i)] Denote by $x = [a_1,a_2,\cdots,a_{2n}]$ the even positive continued fraction expansion of $x$. Then the numerator of $[x]^\sharp_{q,0}$ is the generating function of ordered ideals $I$ in the fence poset $F(a_1,a_2,\cdots,a_{2n}-1)$ such that $v_0\in I$ iff $v_1\in I$.
    \item[(ii)] Denote by $x = [a_1,a_2,\cdots,a_{2n+1}]$ the odd positive continued fraction expansion of $x$. Then the numerator of $[x]^\flat_{q,0}$ is the generating function of ordered ideals $I$ in the fence poset $F(0,a_1-1,a_2,\cdots,a_{2n+1}-1)$.
\end{itemize}
\end{proposition}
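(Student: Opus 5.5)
The plan is to deduce both statements from the corresponding fence-poset interpretations already known for the standard case $t=1$, using the explicit relation \eqref{Eq:rel-t-0} established in the proof of Theorem \ref{Thm:pos-Farey-d}. Recall that, up to a power of $q$,
\[
A_0^\sharp = qA_1^\sharp+(1-q)B_1^\sharp, \qquad A_0^\flat=\frac{qA_1^\flat+(1-q)B_1^\flat}{q^2-q+1}.
\]
I will take as input the known result (Morier-Genoud--Ovsienko, and the fence-poset formulation of Kantarc\i{} O\u{g}uz and collaborators) that $A_1^\sharp$ is the rank generating function of order ideals of $F(a_1,\dots,a_{2n}-1)$. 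I also need that $B_1^\sharp$ is the generating function of the fence obtained by deleting the first ascending segment $v_0<\dots<v_{a_1}$, suitably normalised. The analogous statements for $\flat$ come from Bapat--Becker--Licata.

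\textbf{Part (i).} Let $F=F(a_1,\dots,a_{2n}-1)$. Since $v_0<v_1$ and ideals are down-closed, the condition ``$v_0\in I$ iff $v_1\in I$'' excludes exactly the ideals with $v_0\in I$ and $v_1\notin I$. For such an ideal, $v_2,\dots,v_{a_1}$ are also absent. So these ideals are in bijection, with weight shifted by $q$, with the ideals of the subfence starting at $v_{a_1}$ in which $v_{a_1}$ is absent. I would therefore write
\[
\#_{\mathrm{allowed}}=A_1^\sharp-q\,G,
\]
where $G$ is the generating function of the truncated fence with its top element forced out. I would then express $G$ as a combination of $A_1^\sharp$ and $B_1^\sharp$, by splitting ideals of the truncated fence according to whether $v_{a_1}$ belongs to them. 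The target is the identity $A_1^\sharp-qG=qA_1^\sharp+(1-q)B_1^\sharp$ up to the normalising power of $q$, equivalently $qG=(1-q)(A_1^\sharp-B_1^\sharp)$ after possibly passing to complements. If the direct bijective route gets tangled, the fallback is induction on the continued fraction. Both sides satisfy the same recursions coming from $T_q$ and $S_{q,0}$, namely $[x+1]_{q,0}=q[x]_{q,0}+1$ and $[1/x]^\sharp_{q,0}=1/\big((q^2-q+1)[x]^\flat_{q,0}\big)$. On the fence side, appending one element to a segment gives a linear recursion in the generating functions, as in the classical proof.

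\textbf{Part (ii).} This is analogous. The poset $F(0,a_1-1,a_2,\dots,a_{2n+1}-1)$ starts with a descending segment of length $a_1-1$. I would show that its ideal generating function $P$ satisfies $(q^2-q+1)P=qA_1^\flat+(1-q)B_1^\flat$, using the fence description of $A_1^\flat$ and $B_1^\flat$ and again splitting on the first elements. The division by $q^2-q+1$ is guaranteed to be exact by Lemma \ref{Lem:special-values}, and this serves as a consistency check on the computed identity.

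\textbf{Main obstacle.} I expect the hardest part to be the bookkeeping rather than any conceptual step. This includes matching the conventions: lower versus upper ideals, the parity of the expansion, and the stray factor $q$ that the proof of Theorem \ref{Thm:pos-Farey-d} only controls ``up to a factor of $q$''. It also includes identifying $B_1^\square$ precisely as a sub-fence generating function. I would settle the normalisations on small cases first, such as $x=n$, $x=1/n$ and $x=5/3$, using the listed values of $[x]_{q,0}$, before writing the general bijection or induction.
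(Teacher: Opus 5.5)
\textbf{Part (i): the input you start from is wrong.} The paper's part (ii) shows its normalisation: \cite[Theorem 4]{MGO-2020} attaches to an even expansion $[c_1,\dots,c_{2k}]$ the fence $F(c_1-1,c_2,\dots,c_{2k}-1)$. So $A_1^\sharp$ is the ideal generating function of the fence on $v_1,\dots,v_N$ only. It is not that of the whole of $F(a_1,\dots,a_{2n}-1)$, which has one more element. For $x=2=[1,1]$ the chain $v_0<v_1$ gives $1+q+q^2$, while $A_1^\sharp=1+q$.

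Consequently $\#_{\mathrm{allowed}}=A_1^\sharp-qG$ is false. Your $G$ is simply $B_1^\sharp$, so your target reads $qB_1^\sharp=(1-q)(A_1^\sharp-B_1^\sharp)$, and this fails in every normalisation. The left side has nonnegative coefficients. For $x\neq 1$ the right side is a nonzero polynomial whose coefficients sum to $0$, so it has a negative coefficient.

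The repair is short and uses your (correct) description of $B_1^\sharp$ as the generating function of the fence on $v_{a_1+1},\dots,v_N$:
\begin{itemize}
\item ideals with $v_0\notin I$ contribute $B_1^\sharp$;
\item ideals with $v_0\in I$ contribute $qA_1^\sharp$;
\item the excluded ideals ($v_0\in I$, $v_1\notin I$) contribute $qB_1^\sharp$.
\end{itemize}
This leaves $qA_1^\sharp+(1-q)B_1^\sharp$, which is the numerator from \eqref{Eq:rel-t-0}. Corrected this way, your count is a sound elementary replacement for the paper's appeal to Propositions A.1 and A.3 of \cite{MGO-2020}. You still need to cite a source for the fence model of $B_1^\sharp$.

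\textbf{Part (ii): the key idea is missing.} You never say which fences model $A_1^\flat$ and $B_1^\flat$. Nothing in ``splitting on the first elements'' explains where the factor $q^2-q+1$ in $(q^2-q+1)P=qA_1^\flat+(1-q)B_1^\flat$ would come from. Exact divisibility via Lemma \ref{Lem:special-values} is only a consistency check, not a proof.

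The paper avoids the division entirely. Put $y=\frac{x}{x-1}$, whose even expansion is $[1,a_1-1,a_2,\dots,a_{2n+1}]$. Because $(q,0)$-matrices of determinant $-1$ exchange $\flat$ and $\sharp$, one has $[x]^\flat_{q,0}=T_qS_{q,0}T_qN_{q,0}U_f\cdot[y]^\sharp_{q,1}$. Multiplying out gives $[x]^\flat_{q,0}=\frac{C_1}{C_1-qD_1}$, where $[y]^\sharp_{q,1}=\frac{C_1}{D_1}$. So the numerator is $C_1$, and \cite[Theorem 4]{MGO-2020} applied to $y$ yields exactly $F(0,a_1-1,a_2,\dots,a_{2n+1}-1)$.

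Your fallback induction on the continued fraction meets the same obstacle. The recursion $[1/x]^\sharp_{q,0}=1/\big((q^2-q+1)[x]^\flat_{q,0}\big)$ mixes $\sharp$ with $\flat$ and numerators with denominators, and it introduces the factor $q^2-q+1$ together with possible cancellations you would have to track.
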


\begin{proof}
Denote $[x]^\sharp_{q,1} = \frac{A_1}{B_1}$. The numerator of $[x]^\sharp_{q,0}$ is $qA_1+(1-q)B_1$. This polynomial was considered by Morier-Genoud and Ovsienko in Proposition~A.1 and Proposition~A.3 of \cite{MGO-2020}, giving directly the combinatorial interpretation of the numerator of $[x]_{q,0}^\sharp$. \\
\noindent Denote now $[x]_{q,1}^\flat = \frac{A^\flat_1}{B_1^\flat}$. The numerator of $[x]_{q,0}^\flat$ is $(qA_1^\flat + (1-q)B_1^\flat)/(q^2-q+1)$. To obtain a polynomial expression of this numerator, we consider the auxiliary rational number $y = \frac{x}{x-1} = \begin{pmatrix}
1 & 0\\
1 & -1\\
\end{pmatrix}\cdot x$. \\
\noindent The even positive continued fraction expansion of $y$ is $[1,a_1-1,a_2,\cdots,a_{2n+1}]$. By \cite[Theorem 4]{MGO-2020}, the numerator of $[y]_{q,1}^\sharp$ is the generating function of ordered ideals in the fence poset $F(0,a_1-1,a_2,\cdots,a_{2n+1}-1)$. \\
\noindent Since $(q,0)$-matrices of determinant $-1$ exchange left and right deformations, we have $[x]_{q,0}^\flat~=~T_qS_{q,0}T_qN_{q,0}\cdot [y]_{q,0}^\sharp~=~T_qS_{q,0}T_qN_{q,0}U_f \cdot [y]_{q,1}^\sharp$, where $N_{q,0}$ is the deformed negation matrix, given in Remark \ref{Rk:conj-neg-matrix}.\\
Denote $[y]_{q,1}^\sharp = \frac{C_1}{D_1}$. 
\begin{align*}
    [x]_{q,0}^\flat &= \begin{pmatrix}
    q^2-q+1 & q-1 \\
    q^2-q+1 & -(q^2-q+1)\\
    \end{pmatrix} U_f \cdot [y]_{q,1}^\sharp\\
    &= \begin{pmatrix}
    q^2-q+1 & q-1 \\
    q^2-q+1 & -(q^2-q+1)\\
    \end{pmatrix} \cdot\frac{qC_1 + (1-q)D_1}{(q^2-q+1)D_1}\\
    &= \frac{C_1}{C_1 - qD_1}.
\end{align*}
Then the numerator of $[x]_{q,0}^\flat$ is $C_1$, which concludes.
\end{proof}

\begin{example}
Let us consider $x = \frac{7}{5} = [1,2,1,1] = [1,2,2]$. Its $(q,0)$-deformations are 
\[
\left[\frac{7}{5}\right]_{q,0}^\flat = \frac{1+2q+2q^2+q^3+q^4}{1+q+q^2+q^3+q^4} \text{ and } \left[\frac{7}{5}\right]_{q,0}^\sharp = \frac{1+q+2q^2+q^3+q^4+q^5}{(1-q+q^2)(1+q+2q^2+q^3)}. 
\]
The numerator of $[x]^\sharp_{q,0}$ is the generating function of ordered ideals in the following fence poset,
\begin{center}
\begin{tikzpicture}[line width=1pt]
\draw[->] (0,1) node {$\bullet$} node[below] {$v_0$} to[bend right] (1,1) node {$\bullet$} node[above] {$v_1$};
\draw[->] (1,1) to[bend right] (0,1);
\draw (1,1) node {$\bullet$} -- (2,0) node {$\bullet$} node[above] {$v_2$};
\draw (2,0) node {$\bullet$} -- (3,-1) node {$\bullet$} node[below] {$v_3$};
\draw (3,-1) node {$\bullet$} -- (4,0) node {$\bullet$} node[above] {$v_4$};
\end{tikzpicture}
\end{center}
\noindent where the two arrows between $v_0$ and $v_1$ means that they must come together in the ideal. \\
The left numerator is the generating function of the fence poset below.
\begin{center}
\begin{tikzpicture}[line width=1pt]
\draw (0,0) node {$\bullet$} node[below] {$v_0$} -- (1,1) node {$\bullet$} node[below] {$v_1$};
\draw (1,1) node {$\bullet$} -- (2,2) node {$\bullet$} node[above] {$v_2$};
\draw (2,2) node {$\bullet$} -- (3,1) node {$\bullet$} node[below] {$v_3$};
\end{tikzpicture}
\end{center}
\end{example}

Rational numbers greater than $1$ parametrize a family of knots called rational knots, see for example \cite{KL_rational_tangles}. It was observed in \cite[Appendix A]{MGO-2020} that the deformation of $x$ (the right $(q,1)$-deformation) gives a way to compute the Jones polynomial of the rational knot parametrized by $x$. With the right $(q,0)$-deformation, the computation is even more direct.

\begin{proposition}  For any $x>1$, the numerator of $[x]_{q,0}^\sharp$ is (up to $q$ and up to the change of variable $t=-q$) the Jones polynomial of the rational knot associated to $x$.
\end{proposition}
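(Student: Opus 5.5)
The plan is to reduce the statement to the known result of Morier-Genoud and Ovsienko in \cite[Appendix A]{MGO-2020}. Write $x=\tfrac{a}{b}>1$ and $[x]_{q,1}^\sharp=\tfrac{A_1}{B_1}$.

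By \eqref{Eq:rel-t-0} and the gcd discussion in the proof of Theorem \ref{Thm:pos-Farey-d}, the numerator of $[x]_{q,0}^\sharp$ equals
\[
A_0^\sharp = qA_1+(1-q)B_1
\]
up to a power of $q$. So it suffices to identify $qA_1+(1-q)B_1$ with the Jones polynomial, up to a monomial $\pm q^k$ and the substitution $t=-q$.

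The reference \cite[Appendix A]{MGO-2020} (their Proposition A.1/A.3 and the theorem on Jones polynomials of rational knots) expresses the normalized Jones polynomial $V_x(t)$ of the rational knot $C(x)$ exactly through the combination $qA_1+(1-q)B_1$ after setting $t=-q$, up to a monomial. The authors remark that the right $(q,1)$-deformation computes the Jones polynomial through a linear combination of numerator and denominator. I would restate that formula precisely, check that its combination has coefficients $q$ and $1-q$, and conclude immediately.

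The main obstacle is matching conventions: the sign of $q$, the mirror image ($q\leftrightarrow q^{-1}$, harmless by palindromicity of traces up to monomial shifts), and the continued-fraction parity used for $C(x)$. If the cited formula were instead phrased via the combinatorial model (fence poset ideals, or the Kauffman-state sum for the $4$-plat diagram), I would argue by induction on the continued fraction. Both $qA_1+(1-q)B_1$ and the Kauffman bracket of the rational tangle closure satisfy the same three-term recursions under $x\mapsto x+1$ and $x\mapsto \tfrac{1}{x}$. These are the skein relation for adding a twist, and on the deformation side the relations $[x+1]=q[x]+1$ and $[1/x]=1/((q^2-q+1)[x]^\flat)$. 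The two sides also agree on base cases such as $x=2$ (the Hopf link, where $[2]_{q,0}^\sharp$ has numerator $1+q^2$) and $x=3$ (the trefoil, with numerator $1+q^2+q^3$, matching $V(t)=-t^{-4}+t^{-3}+t^{-1}$ after $t=-q$ and a monomial shift). Agreement of the recursions and the base cases then gives the result for all $x>1$.
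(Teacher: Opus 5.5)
Your main route is the paper's proof. By \eqref{Eq:rel-t-0} and the gcd discussion, the numerator of $[x]_{q,0}^\sharp$ is $qA_1+(1-q)B_1$. Then \cite[Proposition A.1]{MGO-2020} expresses the Jones polynomial of the rational knot through exactly this combination, so the statement follows by citation.

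Two of your auxiliary remarks are wrong, however.

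First, the mirror ambiguity is not ``harmless by palindromicity of traces''. That palindromicity concerns $\tr(A_q)$, not numerators, and the numerators are not palindromic: $x=3$ gives $1+q^2+q^3$, whereas $x=\tfrac32$ (the mirror trefoil) gives $1+q+q^3$. Your own base-case check fails as written. Putting $t=-q$ in $-t^{-4}+t^{-3}+t^{-1}$ gives $-q^{-4}(1+q+q^3)$. To get $-q(1+q^2+q^3)$ you need $t=-q^{-1}$ (equivalently, the other chirality), which is also what the paper uses in its $5_2$ example. So the chirality convention attaching a knot to $x$ genuinely decides whether $t=-q$ or $t=-q^{-1}$ is the right substitution; it has to be fixed, not dismissed.

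Second, the fallback induction is not well posed. The numerator alone satisfies no closed recursion:
\begin{itemize}
\item $x\mapsto x+1$ sends $(A,B)$ to $(qA+B,B)$;
\item $x\mapsto \tfrac1x$ ties $[\tfrac1x]_{q,0}^\sharp$ to the \emph{left} version $[x]_{q,0}^\flat$;
\item on the knot side, inversion exchanges the numerator and denominator closures of the tangle.
\end{itemize}
You would have to induct on the whole pair, e.g.\ the tangle bracket written in the basis $\langle[0]\rangle,\langle[\infty]\rangle$, matched against the full deformed matrix. Checking $x=2,3$ does not seed such an induction.
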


\begin{proof}
 This follows from the formula \eqref{Eq:rel-t-0} expressing $(q,0)$-rationals in terms of $(q,1)$-rationals, which coincides with the formula for the Jones polynomial given in \cite[Proposition A.1]{MGO-2020}.
\end{proof}

\begin{example}
The rational knot associated to $\frac{7}{5}$ is the alternating knot with $5$ crossings denoted $5\_ 2$ in the Rolfsen knot table \cite{knot_atlas}, see Figure \ref{fig:5_2}. Its Jones polynomial is $-t^{-6} + t^{-5} -t^{-4} + 2t^{-3} -t^{-2} + t^{-1}$. Replacing $t$ by $-q^{-1}$ and multiplying by $-q^{-1}$, we recover the numerator of $\left[\tfrac{7}{5}\right]_{q,0}^\sharp$. 
\end{example}

\begin{figure}[h!]
    \centering
    \includegraphics[height=3cm]{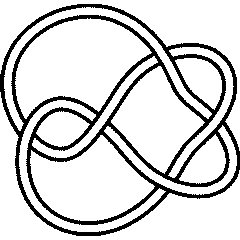}
    \caption{Knot $5\_ 2$ from Rolfsen knot table in the Knot Atlas \cite{knot_atlas}}
    \label{fig:5_2}
\end{figure}

\bibliographystyle{plain}
\bibliography{ref}
%\nocite{*}

\end{document}